\def\q{\quad}
\def\qq{\qquad}
\def\t{\hbox}
\def\e{\equiv}
\def\f{\frac}
\def\b{\binom}
\def\a{\alpha}
\def\ord{\t{ord}}
\def\mod#1{\ (\hbox{\rm mod}\ #1)}
\def\qtq#1{\q\t{#1}\q}
\theoremstyle{plain}
\newtheorem{thm}{Theorem}[section]
\theoremstyle{definition}
\theoremstyle{lemma}
\newtheorem{lem}{Lemma}[section]
\newtheorem{cor}{Corollary}[section]
\begin{document}

\title{Extensions of Stern's congruence for Euler numbers}
\author{
   Zhi-Hong Sun$^1$, Long Li$^2$\\
   $^1$ School of Mathematical Sciences,  Huaiyin Normal University,
 \\Huaian 223001, People's Republic of China \\
 E-mail: zhihongsun@yahoo.com\\
 $^2$ School of Mathematics and Statistics, Jiangsu Normal University,
 \\Xuzhou 221116, People's Republic of China \\
  E-mail:  lilong6820@126.com }
\date{}          
\maketitle
 \abstract{For a nonzero integer $a$ let ${E_n^{(a)}}$  be given by
 $\sum_{k=0}^{[n/2]}\binom
n{2k}a^{2k}E_{n-2k}^{(a)}=(1-a)^n$ $(n=0,1,2,...)$, where $[x]$ is
the greatest integer not exceeding $x$. As $E_n^{(1)}=E_n$ is the
Euler number, $E_n^{(a)}$ can be viewed as a generalization of Euler
numbers. Let $k$ and $m$ be positive integers, and let $b$ be a
nonnegative integer. In this paper, we determine $E_{2^mk+b}^{(a)}$
modulo $ 2^{m+10}$ for $m\ge 5$. For $m\ge 5$ we also establish
congruences for $U_{k\varphi{(5^m)}+b},\; E_{k\varphi{(5^m)}+b},\;
S_{k\varphi{(5^m)}+b}\mod{5^{m+5}}$ and
$S_{k\varphi{(3^m)}+b}\mod{3^{m+5}},$ where $U_{2n}=E_{2n}^{(3/2)}$,
$S_n=E_n^{(2)}$ and $\varphi(n)$ is Euler's function.
\par\q
\newline MSC: 11B68, 11A07
\newline Keywords: Congruence, Euler number
}
 \endabstract
\section{Introduction}
\par
Let $\Bbb Z$ and $\Bbb N$ be the set of integers and the set of
positive integers, respectively. The Euler numbers $\{E_n\}$ are
given by
$$E_0=1,\q
E_{2n-1}=0,\q\sum_{r=0}^n\b{2n}{2r}E_{2r}=0\q(n=1,2,3,\ldots).$$ For
$k,m\in\Bbb N$ and $b\in\{0,2,4,\ldots\}$, in 1875 Stern [9] proved
the following congruence, which is now known as Stern's congruence:
$$E_{2^mk+b}\e E_b+2^mk\mod{2^{m+1}}.\tag 1.1$$
There are many modern proofs of (1.1). See for example [1,3,8,10].
\par
Let $b\in\{0,2,4,\ldots\}$ and $k,m\in\Bbb N$. In [7] the first
author and L.L. Wang showed that
$$E_{2^mk+b}\e E_b+
2^mk(7(b+1)^2-18 +2^mk(7-b))\mod {2^{m+7}}\qtq{for}m\ge 3.\tag 1.2$$
In [4], the first author introduced the sequence $\{E_n^{(a)}\}$ as
a generalization of Euler numbers. For $a\not=0$, $\{E_n^{(a)}\}$ is
given by
$$\sum_{k=0}^{[n/2]}\binom n{2k}a^{2k}E_{n-2k}^{(a)}=(1-a)^n\q
(n=0,1,2,...).\tag 1.3$$ Clearly $E_n=E_n^{(1)}$. The first few
values of $E_n^{(a)}$ are shown below:
 $$\aligned &E_0^{(a)}=1,\q E_1^{(a)}=1-a,\q
E_2^{(a)}=1-2a,\q E_3^{(a)}=1-3a+2a^3,\\&E_4^{(a)}=1-4a+8a^3,\q
E_5^{(a)}=1-5a+20a^3-16a^5,\\& E_6^{(a)}=1-6a+40a^3-96a^5,\\&
E_7^{(a)}=1-7a+70a^3-336a^5+272a^7,\\&
E_8^{(a)}=1-8a+112a^3-896a^5+2176a^7.\endaligned\tag 1.4$$ For a
prime $p$ and nonzero integer $n$ let $\ord_pn$ denote the unique
nonnegative integer $\alpha$ such that $p^{\a}\mid n$ and
$p^{\a+1}\nmid n$.
 In [4],
Z.H. Sun showed that for given nonzero integer $a$, nonnegative
integer $b$ and positive integers $k$ and $m$,
$$\aligned &E_{2^mk+b}^{(a)}- E_b^{(a)}
 \\&\e \cases 2^mk(a^3((b-1)^2+5)-a+2^mka^3(b-1))
 \mod{2^{m+4+3\ord_2a}}& \t{if $2\mid a$,}
\\ 2^mka((b+1)^2+4-2^mk(b+1))
 \mod{2^{m+4}}&\t{if $2\nmid a$ and $2\mid b$,}
\\ 2^mk(a^2-1)
 \mod{2^{m+4}}&\t{if $2\nmid ab$.}
 \endcases\endaligned\tag 1.5$$
In this paper we determine $E_{2^mk+b}^{(a)}-
E_b^{(a)}\mod{2^{m+10}}$ for $m\ge 4$. See Theorems 2.1-2.3.
\par For a real number $x$ let $[x]$ be the greatest integer
 not exceeding $x$. Let $\{U_n\}$ and $\{S_n\}$ be given by
 $$\align &U_0=1,\q U_n=-2\sum_{k=1}^{[n/2]}\binom n{2k}U_{n-2k}\q
  (n\geq1),
  \\&S_0=1,\q S_n=1-\sum_{k=0}^{n-1}\b nk2^{2n-2k-1}S_k\q(n\ge 1).
  \endalign$$
 From [4,5] we know that
$$U_{2n}=E_{2n}^{(3/2)}\qtq{and}S_n=E_n^{(2)}.\tag 1.6$$
Let $\varphi(n)$ denote Euler's totient function. In [6] the first
author established congruences for $E_{k\varphi(3^m)+b}
\mod{3^{m+4}}$ and $U_{k\varphi(3^m)+b} \mod{3^{m+4}}.$
\par For $m\in \Bbb N$, let $\Bbb Z_m$ be the set of rational numbers
whose denominator is
  coprime to $m$. In [2], the first author introduced the notion of $p-$regular
   functions. If $f(k)\in \Bbb
Z_p$ for any nonnegative integers $k$ and $\sum_{k=0}^n\binom
nk(-1)^kf(k)\equiv 0\mod{p^n}$ for all $n \in \Bbb N$, then $f$ is
called a $p-$regular function.
\par
Let $(\frac a p)$ be the Legendre symbol. In [5] and [3], the first
author proved that for any odd prime $p$, both $f_1(k)=(1-(\frac p
3)p^{k(p-1)+b})U_{k(p-1)+b}$ and
$f_2(k)=(1-(-1)^{\frac{p-1}2}p^{k(p-1)+b})E_{k(p-1)+b}$ are
$p-$regular functions. In Sections 3-6, using $p$-regular function
and the binomial inversion formula, we
  obtain congruences for $$U_{k\varphi(5^m)+b}
,\ E_{k\varphi(5^m)+b},\ S_{k\varphi(5^m)+b}
\mod{5^{m+5}}\qtq{and}S_{k\varphi(3^m)+b}\mod{3^{m+5}},$$ where
$k,m\in\Bbb N$, $m\ge 5$ and $b$ is a nonnegative integer. See
Theorems 3.1, 4.1, 5.1 and 6.1.

\section{Congruences for $E_{2^mk+b}^{(a)}\mod{2^{m+10}}$}
\par\q  For $a \not =0$ let
$\{E_n^{(a)}\}$ be given by (1.3), and let $$
e_s(a,b)=2^{-s}\sum_{r=0}^s\binom sr(-1)^rE_{2r+b-2[\frac b
2]}^{(a)}\tag 2.1$$ for $s\in\Bbb N$. Then the first few $e_s(a,b)$
are as follows:
$$\aligned
&e_1(a,b)=\cases a\q&\text{if $2\mid b$},\\a-a^3 &\text{if $2\nmid
b$}.\endcases\\& e_2(a,b)=\cases 2a^3\q&\text{if $2\mid b,$}
 \\4a^3(1-a^2)&\text{if $2\nmid b$},\endcases
 \\& e_3(a,b)=\cases 2a^3(6a^2-1)\q &\text{if $2\mid b$},
 \\2a^3(-17a^4+18a^2-1)&\text{if $2\nmid b$},\endcases
 \\& e_4(a,b)=\cases 8a^5(17a^2-4)\q&\text{if $2\mid b$},
 \\-48a^5+544a^7-496a^9&\text{if $ 2\nmid b$},\endcases
\\& e_5(a,b)=\cases 16a^5-680a^7+2480a^9\q&\text{if $2\mid b$},
\\16a^5-1360a^7+12400a^9-11056a^{11}&\text{if $2\nmid b$},
\endcases
\\& e_6(a,b)=\cases 816a^7-19840a^9+66336a^{11}\q&\text{if $2\mid b$},
\\1088a^7-49600a^9+398016a^{11}-349504a^{13}&\text{if $2\nmid b$}
\endcases\\&e_7(a,b)=\cases -272a^7+41664a^9-773920a^{11}+2446528
a^{13}\q&\text{if $2\mid b$},\\
-272a^7+69440a^9-2321760a^{11}+17125696a^{13}-14873104a^{15}
&\text{if $2\nmid b$}.\endcases\endaligned\tag 2.2$$
\newtheorem{theorem}{Theorem}
\newtheorem{corollary}{Corollary}
\begin{lem}$($See \rm{[4, Theorem 2.1]}) Let $n$ be a nonnegative integer
and $a\not=0$. Then
$$E_n^{(a)}=\sum_{k=0}^{[n/2]}\binom n{2k}(1-a)^{n-2k}a^{2k}E_{2k}.$$
\end{lem}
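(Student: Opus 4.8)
The plan is to verify that the right-hand side of the asserted identity satisfies the defining recurrence (1.3), which pins down $E_n^{(a)}$ uniquely. Write $G_n=\sum_{k=0}^{[n/2]}\binom n{2k}(1-a)^{n-2k}a^{2k}E_{2k}$. The $n$-th instance of (1.3) reads $E_n^{(a)}=(1-a)^n-\sum_{k=1}^{[n/2]}\binom n{2k}a^{2k}E_{n-2k}^{(a)}$, so $\{E_n^{(a)}\}$ is the \emph{unique} sequence $\{x_n\}$ with $\sum_{j=0}^{[n/2]}\binom n{2j}a^{2j}x_{n-2j}=(1-a)^n$ for every $n\ge 0$. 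Hence it suffices to prove $\sum_{j=0}^{[n/2]}\binom n{2j}a^{2j}G_{n-2j}=(1-a)^n$.

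First I would substitute the definition of $G_{n-2j}$ and reindex the resulting double sum by $m=j+k$, using the trinomial revision identity $\binom n{2j}\binom{n-2j}{2k}=\binom n{2m}\binom{2m}{2k}$ (valid since $2m-2k=2j$ and $n-2m=n-2j-2k$). For fixed $m$ the inner index $k$ runs over $0\le k\le m$, so the double sum becomes $\sum_{m=0}^{[n/2]}\binom n{2m}(1-a)^{n-2m}a^{2m}\bigl(\sum_{k=0}^m\binom{2m}{2k}E_{2k}\bigr)$.

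Then I would invoke the defining recurrence of the Euler numbers as stated in the Introduction, $\sum_{k=0}^m\binom{2m}{2k}E_{2k}=0$ for $m\ge 1$, together with $E_0=1$: the inner parenthesis equals $1$ when $m=0$ and $0$ otherwise, so only the $m=0$ term survives and contributes $\binom n0(1-a)^n=(1-a)^n$, as required. An alternative route is via exponential generating functions: (1.3) is equivalent to $\cosh(ax)\sum_nE_n^{(a)}x^n/n!=e^{(1-a)x}$, while $\sum_kE_{2k}(ax)^{2k}/(2k)!=1/\cosh(ax)$, so both sides of the identity have EGF $e^{(1-a)x}/\cosh(ax)$. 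I do not anticipate a real obstacle; the one place needing attention is the bookkeeping of the index shift $m=j+k$ — making sure the summation ranges transform correctly so that the Euler recurrence can be applied termwise.
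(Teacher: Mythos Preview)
Your argument is correct. The substitution $m=j+k$ together with the trinomial revision $\binom{n}{2j}\binom{n-2j}{2k}=\binom{n}{2m}\binom{2m}{2k}$ is exactly what is needed, and the Euler-number recurrence $\sum_{k=0}^{m}\binom{2m}{2k}E_{2k}=\delta_{m,0}$ collapses the inner sum cleanly; the uniqueness of the solution to (1.3) then forces $G_n=E_n^{(a)}$. The alternative EGF computation you sketch is also valid, since (1.3) is equivalent to $\cosh(ax)\sum_{n\ge 0}E_n^{(a)}x^n/n!=e^{(1-a)x}$ and the classical identity $\sum_{k\ge 0}E_{2k}x^{2k}/(2k)!=1/\cosh x$ gives the matching EGF for the right-hand side.

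As for comparison: the present paper does not supply its own proof of this lemma---it is quoted from [4, Theorem~2.1] (Z.~H.~Sun, \emph{J.\ Number Theory} \textbf{132} (2012), 675--700). The proof there proceeds via exponential generating functions, essentially your ``alternative route'': one shows $\sum_{n\ge 0}E_n^{(a)}x^n/n!=e^{(1-a)x}\operatorname{sech}(ax)$ and then expands the product. Your primary argument (verifying the defining recurrence directly by a binomial rearrangement) is a slightly more elementary, generating-function-free variant that reaches the same conclusion with no extra machinery.
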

\begin{lem}$($See \rm{[4, Theorem 3.1]}) Let a be a nonzero integer,  $n\in \Bbb N$ and let b
be a nonnegative integer. Suppose that $\alpha_n$ is a nonnegative
integer given by $2^{\alpha_n-1}\leq n <2^{\alpha_n}$.
 \par $\t{\rm
(i)}$ We have
 $$\aligned \sum_{k=0}^n \binom n k (-1)^{n-k}E_{2k}^{(a)}\equiv \cases 0
  \mod {2^{(n+1)\text{\rm{ord}}_2a-\alpha_n+\text{\rm{ord}}_2n+2n}}
  &\text{if $2\mid n$},\\
0\mod{2^{n\text{\rm{ord}}_2a+2n-\alpha_n}}&\text{if $2\nmid n$}.
\endcases\endaligned$$
and
$$
\aligned\sum_{k=0}^n\binom n k (-1)^{n-k}E_{2k+1}^{(a)}\equiv \cases
0\mod{2^{(n+1)\text{\rm{ord}}_2a+2n}}&\text{if $2\mid n$},
\\
0\mod{2^{n\text{\rm{ord}}_2a+2n-\text{\rm{ord}}_2{(n+1)}}}&\text{if
$2\nmid n$}.
\endcases\endaligned$$
\par $\t{\rm(ii)}$ We have
$$\sum_{k=0}^n\binom n k(-1)^kE_{2k+b}^{(a)}\equiv 0 \mod{2^{(2+\text{\rm{ord}}_2a)n-\alpha_n}}.
$$
\end{lem}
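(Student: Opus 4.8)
The plan is to convert each of the three sums into one explicit combination of (signed) tangent numbers via Lemma 1, and then carry out a $2$-adic valuation estimate: term by term for (ii), and after a regrouping for (i).

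\smallskip\noindent\emph{Step 1 (reduction to a tangent-number sum).} By Lemma 1, and since $E_j=0$ for odd $j$, one has the umbral identity $E_n^{(a)}=\big((1-a)+a\mathbf E\big)^n$: expand the right-hand side as a polynomial in the symbol $\mathbf E$ and then replace each $\mathbf E^j$ by $E_j$. Put $\theta=(1-a)+a\mathbf E$ and $\eta=1-\theta=a(1-\mathbf E)$, so that $1+\theta=2-\eta$ and hence $1-\theta^2=\eta(2-\eta)$. By the binomial theorem, as polynomials in $\mathbf E$,
$$\sum_{k=0}^n\binom nk(-1)^kE_{2k+b}^{(a)}=\theta^b(1-\theta^2)^n=(1-\eta)^b\,\eta^n(2-\eta)^n .$$
Since $\eta^m=a^m(1-\mathbf E)^m$ maps to $a^m t_m$, where $t_m:=\sum_j\binom mj(-1)^jE_j=\sum_l\binom m{2l}E_{2l}$ (the two expressions agree because $E_j=0$ for odd $j$), expanding the right-hand side gives
$$\sum_{k=0}^n\binom nk(-1)^kE_{2k+b}^{(a)}=\sum_{i=0}^n\sum_{j=0}^b\binom ni\binom bj\,2^{n-i}(-1)^{i+j}a^{n+i+j}\,t_{n+i+j}.\tag{$\star$}$$
Here $t_0=1$, $t_m=0$ for even $m\ge2$, and $t_m=2^{m+1}(2^{m+1}-1)B_{m+1}/(m+1)$ for odd $m$ (from $e^t\operatorname{sech}t=1+\tanh t$ and the Bernoulli series of $\tanh$); hence, by von Staudt--Clausen, $\ord_2 t_m=m-\ord_2(m+1)$ for odd $m$. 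Consequently only the terms of $(\star)$ with $n+i+j$ odd survive, and such a term has $2$-adic order
$$\ord_2\binom ni+\ord_2\binom bj+2n+j+(n+i+j)\ord_2 a-\ord_2(n+i+j+1).\tag{$\star\star$}$$

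\smallskip\noindent\emph{Step 2 (part (ii)).} Since $(n+i+j)\ord_2 a\ge n\ord_2 a$, it suffices to show $\ord_2\binom ni+\ord_2\binom bj+j+\alpha_n\ge\ord_2(n+i+j+1)$ for the surviving terms. If $j=0$, this holds because $n+i+1\le 2n+1<2^{\alpha_n+1}$ forces $\ord_2(n+i+1)\le\alpha_n$. If $j\ge1$, put $t=\ord_2(n+i+j+1)$: for $t\le\alpha_n+1$ the left side is $\ge 1+\alpha_n\ge t$; for $t\ge\alpha_n+2$ one has $2^t\le n+i+j+1$ and $i\le n<2^{\alpha_n}$, so $j\ge 2^t-2n-1$, whence $j+\alpha_n\ge 2^t-2n-1+\alpha_n\ge t$ (because $2^t-t$ is increasing and $2^{\alpha_n+2}-(\alpha_n+2)\ge 2n+1-\alpha_n$). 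This proves (ii).

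\smallskip\noindent\emph{Step 3 (part (i)).} Now $b\in\{0,1\}$ and the modulus is larger. For $b=0$ only the index $j=0$ occurs, and $(\star\star)$ — together with $\ord_2(n+i+1)\le\alpha_n$ and, when $n$ is even, with $\ord_2\binom ni\ge\ord_2 n$ for odd $i$ (which follows from $i\binom ni=n\binom{n-1}{i-1}$) — already gives every term order at least the asserted modulus. For $b=1$ a term-by-term estimate is too weak, because the two terms of $(\star)$ sharing a common value of $n+i+j$ partially cancel. One pairs them — $(i,j)=(2s,0)$ with $(2s-1,1)$ when $n$ is odd, $(2s+1,0)$ with $(2s,1)$ when $n$ is even — and applies the identity
$$\binom nr+2\binom n{r-1}=\frac{n+r+1}{n+1}\,\binom{n+1}{r}\qquad(r=2s\ \text{resp.}\ r=2s+1),$$
in which the factor $n+r+1=n+i+j+1$ exactly cancels the deficit $-\ord_2(n+i+j+1)$ in $(\star\star)$; each combined pair then has order $2n+(n+i+j)\ord_2 a+\ord_2\binom{n+1}{r}-\ord_2(n+1)$, which is $\ge$ the asserted modulus, while the few unpaired terms (those whose would-be partner would have index $i=n+1$) meet the bound directly. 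The delicate point is exactly this bookkeeping for $b=1$: choosing the right pairing and the identity that absorbs the tangent-number deficit, and then verifying the residual inequalities for $\ord_2\binom ni$ via Kummer's theorem together with $2^{\alpha_n-1}\le n<2^{\alpha_n}$; by comparison Step 2 is routine once $(\star)$ is in hand.
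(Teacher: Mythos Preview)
The paper does not prove this lemma at all; it is quoted verbatim from reference~[4] (Z.H.~Sun, \emph{J. Number Theory} 132 (2012)), so there is no in-paper argument to compare yours against.

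Your proof is correct and self-contained. The umbral reduction in Step~1 is exactly the right move: Lemma~2.1 says $E_n^{(a)}=\theta^n$ with $\theta=(1-a)+a\mathbf E$, and the substitution $\eta=a(1-\mathbf E)$ turns the alternating binomial sum into a polynomial in $\eta$ whose image is a combination of tangent numbers $t_m$; the formula $\ord_2 t_m=m-\ord_2(m+1)$ for odd $m$ (via von Staudt--Clausen) then reduces everything to elementary $2$-adic inequalities. The termwise estimate in Step~2 is clean, and the pairing device in Step~3 --- using $\binom nr+2\binom n{r-1}=\frac{n+r+1}{n+1}\binom{n+1}{r}$ so that the factor $n+r+1$ cancels the deficit $-\ord_2(n+i+j+1)$ --- is the genuine idea needed for $b=1$. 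A couple of small points you left implicit but which do work: for $n$ even the ``unpaired'' term $(i,j)=(n,1)$ is harmless because $n+1$ is odd, and for $n$ odd the two unpaired terms $(0,0)$ and $(n,1)$ each meet the bound directly (the first with equality). For the paired terms with $n$ even you also need $\ord_2\binom{n+1}{2s+1}\ge\ord_2(n+1)$, which follows from $(2s+1)\binom{n+1}{2s+1}=(n+1)\binom{n}{2s}$; you might state this explicitly. With those details filled in, the argument is complete.
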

\begin{lem}$($See \rm{[4, (3.4)]}$)$ Let $n\in \Bbb N$, $a\in\Bbb Z$,
$a\not=0$, $b\in\{0,1,2,...\}$.
 Then
$$\sum_{r=0}^n\binom nr(-1)^rE_{2r+b}^{(a)}=\sum_{r=0}^{[\frac b 2]}\binom{[\frac b
2]}r(-1)^r\sum_{s=0}^{r+n}\binom{r+n}s(-1)^sE_{2s+b-2[\frac b
2]}^{(a)}.$$
\end{lem}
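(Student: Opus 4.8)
The plan is to recognize both sides as iterated finite differences of a single auxiliary sequence and then invoke Newton's forward-difference formula. Put $\epsilon=b-2[\frac b2]\in\{0,1\}$ and $c=[\frac b2]$, so that $b=2c+\epsilon$, and define $y_s=E_{2s+\epsilon}^{(a)}$ for $s=0,1,2,\dots$; every index occurring below is a nonnegative even number plus $\epsilon$, so all these quantities are defined. Let $\Delta$ denote the forward-difference operator $(\Delta y)_s=y_{s+1}-y_s$, so that $(\Delta^j y)_s=\sum_{i=0}^j\binom ji(-1)^{j-i}y_{s+i}$ for all $j,s\ge 0$, and recall Newton's forward-difference formula $z_c=\sum_{r=0}^{c}\binom cr(\Delta^r z)_0$, valid for any sequence $z$ and any integer $c\ge 0$.

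First I would rewrite the left-hand side. Since $E_{2r+b}^{(a)}=E_{2(r+c)+\epsilon}^{(a)}=y_{r+c}$ and $(-1)^r=(-1)^n(-1)^{n-r}$, we get
$$\sum_{r=0}^n\binom nr(-1)^rE_{2r+b}^{(a)}=\sum_{r=0}^n\binom nr(-1)^ry_{r+c}=(-1)^n(\Delta^n y)_c.$$
Next I would treat the inner sum on the right-hand side the same way, obtaining $\sum_{s=0}^{r+n}\binom{r+n}s(-1)^sE_{2s+\epsilon}^{(a)}=(-1)^{r+n}(\Delta^{r+n}y)_0$, so that the full right-hand side equals
$$\sum_{r=0}^{c}\binom cr(-1)^r(-1)^{r+n}(\Delta^{r+n}y)_0=(-1)^n\sum_{r=0}^{c}\binom cr\bigl(\Delta^r(\Delta^n y)\bigr)_0.$$

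Finally I would apply Newton's formula to the sequence $z=\Delta^n y$: it gives $(\Delta^n y)_c=z_c=\sum_{r=0}^{c}\binom cr(\Delta^r z)_0=\sum_{r=0}^{c}\binom cr(\Delta^{r+n}y)_0$, and substituting this into the first display shows that the left-hand side equals the last expression above, which is the right-hand side. The argument is purely combinatorial and uses nothing about $\{E_n^{(a)}\}$ except that the relevant indices are nonnegative; I do not anticipate a real obstacle, the only points needing a little care being the bookkeeping of the signs $(-1)^n$ and $(-1)^{r+n}$ and the fact that $\binom nr$ and $\binom cr$ vanish outside the displayed ranges, which is exactly what makes the finite sums agree with the difference-operator expressions. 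One could equally prove the identity by induction on $c=[\frac b2]$ using Pascal's rule, but the finite-difference formulation is shorter and more transparent.
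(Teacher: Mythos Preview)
Your proof is correct. The identification $\sum_{r=0}^n\binom nr(-1)^r y_{r+c}=(-1)^n(\Delta^n y)_c$ and the analogous rewriting of the inner sum are both accurate, and the final step is exactly Newton's forward-difference formula applied to the sequence $z=\Delta^n y$. The sign bookkeeping and the index shifts all check out.

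There is nothing to compare against here: the paper does not prove this lemma but simply imports it from reference~[4], equation~(3.4). Your finite-difference argument is the natural and standard way to establish such an identity, and it makes transparent why the result holds for an arbitrary sequence $\{y_s\}$ rather than depending on any property of $\{E_n^{(a)}\}$. The alternative induction on $c=[b/2]$ that you mention would amount to the usual inductive proof of Newton's formula, so the two routes are essentially equivalent.
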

\begin{lem} Let $a$ be a nonzero integer, $k,m\in \Bbb N$ and
 $ b \in \{0,1,2,...\}$. Then
$$E_{2^mk+b}^{(a)}-E_b^{(a)}\equiv
2^{m-1}k\sum_{r=1}^8\binom{2^{m-1}k-1}{r-1}\frac{(-2)^r}r A_r(a,b)
\mod {2^{m+13+9\text{\rm{ord}}_2a}},$$ where
$$A_r(a,b)=2^{-r}\sum_{i=0}^{r}\binom ri(-1)^i E_{2i+b}^{(a)}.\tag 2.3$$
\end{lem}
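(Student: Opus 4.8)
The plan is to deduce the congruence from an \emph{exact} identity and then discard the terms of high index. Write $n=2^{m-1}k$ and $g(i)=E_{2i+b}^{(a)}$, so that $g(n)=E_{2^mk+b}^{(a)}$ and $g(0)=E_b^{(a)}$. From $\binom nr\binom ri=\binom ni\binom{n-i}{r-i}$ and the binomial theorem one has $\sum_{r=i}^{n}\binom nr\binom ri(-1)^{r-i}=\binom ni(1-1)^{n-i}$, which vanishes unless $i=n$; hence, writing the Newton difference $\Delta^rg(0)=\sum_{i=0}^r\binom ri(-1)^{r-i}g(i)$, multiplying by $\binom nr$, summing over $0\le r\le n$ and interchanging the order of summation gives $g(n)=\sum_{r=0}^{n}\binom nr\Delta^rg(0)$. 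By $(2.3)$ we have $\Delta^rg(0)=(-2)^rA_r(a,b)$, so using $\binom nr=\frac nr\binom{n-1}{r-1}$ we obtain the exact formula
$$E_{2^mk+b}^{(a)}-E_b^{(a)}=2^{m-1}k\sum_{r=1}^{2^{m-1}k}\binom{2^{m-1}k-1}{r-1}\frac{(-2)^r}{r}A_r(a,b).$$

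It remains to show that each term with $r\ge 9$ is $\equiv 0\mod{2^{m+13+9\ord_2a}}$, so that the sum may be truncated at $r=8$. Apply Lemma 2.2(ii) with $n$ replaced by $r$: with $\alpha_r$ defined by $2^{\alpha_r-1}\le r<2^{\alpha_r}$ one gets $\sum_{i=0}^r\binom ri(-1)^iE_{2i+b}^{(a)}\equiv 0\mod{2^{(2+\ord_2a)r-\alpha_r}}$, and hence $\ord_2A_r(a,b)\ge(1+\ord_2a)r-\alpha_r$; in particular, since $(2+\ord_2a)r-\alpha_r\ge r$, each $A_r(a,b)$ lies in $\Bbb Z_2$ and the formula above is $2$-adically meaningful. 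Combining this with $\ord_2\big((-2)^r/r\big)=r-\ord_2r$ and $\ord_2\binom{2^{m-1}k-1}{r-1}\ge 0$, the $r$-th term of the sum has $2$-adic valuation at least
$$(m-1)+(r-\ord_2r)+\big((1+\ord_2a)r-\alpha_r\big)=m-1+2r-\ord_2r-\alpha_r+r\,\ord_2a.$$

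Since $9-r\le 0$ for $r\ge 9$, this lower bound is $\ge m+13+9\,\ord_2a$ as soon as $2r-\ord_2r-\alpha_r\ge 14$; the cases $r=9$ (where $\ord_29=0$, $\alpha_9=4$, giving exactly $14$) and $r=10$ are checked by hand, and for $r\ge 11$ one uses $\ord_2r\le\alpha_r-1$ together with the fact that $r-\alpha_r$ is non-decreasing and equals $7$ at $r=11$, so that $2r-\ord_2r-\alpha_r\ge 2(r-\alpha_r)+1\ge 15$. Dropping the terms with $r\ge 9$ from the exact formula then yields the claim. The combinatorial identity in the first step is routine; the real content is the sharp valuation estimate for $A_r(a,b)$ — which is exactly Lemma 2.2(ii) — together with the valuation bookkeeping of the last step, where the constants $8$, $13$ and $9$ are precisely those for which the $r=9$ term is only just divisible enough, so that is the step to carry out with care.
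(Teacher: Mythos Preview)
Your proof is correct and follows essentially the same route as the paper: both derive the exact identity via binomial inversion (you phrase it as the Newton forward-difference formula), then truncate at $r=8$ by combining Lemma~2.2(ii) with the valuation of $2^r/r$. Your valuation bookkeeping for $r\ge 9$ is in fact a bit more careful than the paper's, which simply asserts $\ord_2 A_r(a,b)\ge 5+9\,\ord_2a$ and $2^r/r\equiv 0\pmod{2^9}$ separately without the case analysis.
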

\begin{proof} Let $e_s(a,b)$ and $A_s(a,b)$
   be given by (2.1) and (2.3). Then $E_b^{(a)}=A_0(a,b)$.
 From the binomial inversion formula, we get
$$\sum_{r=0}^s\binom sr\cdot(-1)^r\cdot A_r(a,b)\cdot 2^r=E_{2s+b}^{(a)}.$$
Hence $$\aligned E_{2^mk+b}^{(a)}-E_b^{(a)}
&=\sum_{r=0}^{2^{m-1}k}\binom{2^{m-1}k}r\cdot(-1)^r\cdot 2^r\cdot A_r(a,b)-A_0(a,b)\\
&=\sum_{r=1}^{2^{m-1}k}\binom{2^{m-1}k}r\cdot(-1)^r\cdot 2^r\cdot A_r(a,b)\\
&=2^{m-1}k\sum_{r=1}^{2^{m-1}k}\binom{2^{m-1}k-1}{r-1}\cdot(-1)^r\cdot\frac{2^r}r\cdot
A_r(a,b).\endaligned$$\par
 From  Lemma 2.2(ii) we have
 $$ 2^{(2+\text{ord}_2a)r-\alpha_r}\bigm| \sum_{i=0}^r\binom r i(-1)^iE_{2i+b}^{(a)}.$$
Thus $2^{(1+\text{ord}_2a)r-\alpha_r}\mid A_r(a,b).$\par
 For $r
\ge9$ we have
$\text{ord}_2A_r(a,b)\ge9(1+\text{ord}_2a)-\alpha_9=9(1+\text{ord}_2a)-4=5+9\text{ord}_2a,$\\
 and
$\frac{2^r}r\equiv 0 \mod {2^9}.$
 So
$$\align  E_{2^mk+b}^{(a)}-E_b^{(a)}&=2^{m-1}k\sum_{r=1}^{2^{m-1}k}\binom{2^{m-1}k-1}{r-1}\cdot(-1)^r\cdot\frac{2^r}r\cdot A_r(a,b)\\
&\equiv
2^{m-1}k\sum_{r=1}^8\binom{2^{m-1}k-1}{r-1}\cdot(-1)^r\cdot\frac{2^r}r\cdot
A_r(a,b) \mod {2^{m+13+9\text{ord}_2a}}.\endalign$$ This yields the
result.\end{proof}
\begin{thm} Let $a$ be a nonzero  integer, $k,m\in\Bbb N$,
$m\geqslant 4$ and $b \in \{0,1,2,\ldots\}$. If $2\mid a$, then
$$\aligned E_{2^mk+b}^{(a)}-E_b^{(a)}
\equiv\cases 2^{m}k\{9a^3b^2-(2a^3-128a)b+86a^3-257a
\\ \q+2^{m}ka^3(b-1)\}\mod{2^{m+10}}\qq\qq\qq\qq\q\qq\q\ \; \text{\rm{if} $2\mid b$},\\
2^{m}k\{(a^3-2a^5)b^2+(30a^3+128a)b-2a^5+86a^3+127a\\
 \q+2^{m}ka^3(b-1)\mod{2^{m+10}}
 \q\qq\qq\qq\qq\qq\qq\text{\rm{if} $2\nmid b$}.
\endcases\endaligned$$

\end{thm}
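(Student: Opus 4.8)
The starting point is Lemma 2.4, which already reduces the difference $E_{2^mk+b}^{(a)}-E_b^{(a)}$ to the finite sum
$$2^{m-1}k\sum_{r=1}^8\binom{2^{m-1}k-1}{r-1}\frac{(-2)^r}{r}A_r(a,b)\mod{2^{m+13+9\,\ord_2a}},$$
so the whole task is to pin down the eight quantities $A_r(a,b)$ well enough, and to track the powers of $2$ carefully when $2\mid a$. The first step is to relate $A_r(a,b)$ to the tabulated quantities $e_s(a,b)$ of (2.2). Using Lemma 2.3 with the binomial sum, one expresses $A_r(a,b)=2^{-r}\sum_{i=0}^r\binom ri(-1)^iE_{2i+b}^{(a)}$ in terms of the $e_s(a,b-2[b/2])$, i.e. reduce $b$ to its parity class; concretely $A_r(a,b)$ becomes a $\binom{[b/2]}{\cdot}$-weighted combination of the $e_s$ with $s\le r+[b/2]$, and since $e_s$ carries a factor $2^{?}$ only through its definition $2^{-s}(\cdots)$, one checks $\ord_2 e_s(a,b)$ grows roughly like $s$. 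So for the purpose of working modulo $2^{m+10}$ (equivalently, after factoring out $2^{m-1}k$, modulo $2^{11}$), only finitely many $e_s$ contribute, and their values are exactly the ones listed in (2.2).

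Next I would carry out the arithmetic separately in the two cases $2\mid b$ and $2\nmid b$. In each case substitute the closed forms for $e_1,\dots,e_7$ (and note $e_8$ and beyond are killed: one shows $\ord_2 e_s(a,b)\ge s-2$ or so, which together with $2\mid a$ and the $\frac{(-2)^r}{r}$ factor pushes terms with $r\ge 8$ or large $s$ past the modulus). The dependence on $b$ enters only through the binomial coefficients $\binom{[b/2]}{j}$, which are polynomials in $[b/2]$, hence (after the reduction $b=2[b/2]$ or $2[b/2]+1$) polynomials in $b$; collecting terms produces a quadratic in $b$ — matching the shape $9a^3b^2-(2a^3-128a)b+86a^3-257a$ in the even case and $(a^3-2a^5)b^2+(30a^3+128a)b+\cdots$ in the odd case. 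One must be slightly careful that the "polynomial in $b$" identity is being read modulo $2^{m+10}$: a priori $A_r$ is only determined mod a fixed $2$-power, so the coefficients of $b^2,b^1,b^0$ are themselves only determined mod that power — but since we then multiply by $2^{m-1}k$, the stated precision mod $2^{m+10}$ comes out, and higher powers of $b$ (cubic and up) carry enough extra factors of $2$ (from $\binom{[b/2]}{j}$ with $j\ge3$ hitting higher $e_s$, each contributing $2\mid a$ cubed) to drop out.

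Finally, assemble the two pieces: plug the case-wise polynomial expression for $\sum_{r=1}^8\binom{2^{m-1}k-1}{r-1}\frac{(-2)^r}{r}A_r(a,b)$ back into Lemma 2.4, expand $\binom{2^{m-1}k-1}{r-1}$ for $r=1,2$ (which produce the $2^mk$ and $2^{2m}k^2$-type contributions; note $\binom{2^{m-1}k-1}{1}=2^{m-1}k-1$ gives the extra $2^mka^3(b-1)$ inner term), and check that the $r\ge3$ terms either vanish mod $2^{m+10}$ or have already been folded into the quadratic. The factor $\ord_2 a\ge 1$ (since $2\mid a$) is what makes the error term $2^{m+13+9\ord_2a}$ in Lemma 2.4 comfortably larger than $2^{m+10}$, and is also what lets many intermediate $a$-power terms collapse. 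The main obstacle I anticipate is purely bookkeeping: correctly computing $A_r(a,b)$ from Lemma 2.3 for $r=1,\dots,7$ in both parity classes and then doing the two-variable (in $a$ and $b$) collection of terms without sign or binomial-coefficient slips — the $2$-adic valuation estimates are routine, but the explicit constants $86a^3-257a$, $30a^3+128a$, etc., leave no room for error and will need to be verified against the small-$n$ table (1.4) as a sanity check.
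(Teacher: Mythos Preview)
Your plan is correct and follows essentially the same route as the paper: start from Lemma~2.4, convert each $A_r(a,b)$ into a finite $\binom{[b/2]}{j}$-weighted sum of the tabulated $e_s(a,b)$ via Lemma~2.3 (this is exactly the paper's formula~(2.4)), substitute the values from~(2.2), and then assemble the pieces case by case on the parity of~$b$. The only small imprecision is your remark that one need only expand $\binom{2^{m-1}k-1}{r-1}$ for $r=1,2$: in fact the $r=3$ term contributes a nonzero constant $-\tfrac{8}{3}(-2a^3+4a^5)\equiv 176a^3-32a^5\pmod{2^{11}}$ that feeds into the final constants $86a^3-257a$ (resp.\ $86a^3+127a$), and the simplification at the very end uses identities like $32a^5=2^{10}(a/2)^5\equiv 512a\pmod{2^{11}}$ that exploit $2\mid a$ explicitly---but this is exactly the ``bookkeeping'' you already flagged.
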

\begin{proof} From Lemma 2.2(i)  we have
$$\sum_{s=0}^{r+8}\binom {r+8} s (-1)^s E_{2s+b-2[\frac b 2]}^{(a)}
\equiv 0\mod {2^{14+9\text{ord}_2a}}\qtq{for} r\in \{0,1,2,...\}.$$
Thus,
 $$A_8(a,b)=\frac 1 {2^8}\sum_{r=0}^{[b/2]}\binom {[\frac b 2]}r(-1)^r\sum_{s=0}^{r+8}\binom {r+8} s (-1)^s
 E_{2s+b-2[\frac b 2]}^{(a)}\equiv 0 \mod {2^{6+9\text{ord}_2a}}.$$
From the above and Lemma 2.3 we see that for $n\le 7$,
 $$\align A_n(a,b)&=\frac 1{2^n}\sum_{r=0}^{[\frac b 2]}\binom{[\frac b 2]}r(-1)^r\sum_{s=0}^{r+n}\binom{r+n}s(-1)^sE_{2s+b-2[\frac b 2]}^{(a)}\\
&\equiv \frac 1 {2^n}\sum_{r=0}^{7-n}\binom {[\frac b
2]}r(-1)^r\sum_{s=0}^{r+n}\binom {r+n}s(-1)^sE_{2s+b-2[\frac b
2]}^{(a)} \mod{2^{14-n+9\text{ord}_2a}}. \endalign$$ That is,
$$A_n(a,b)\e \sum_{r=0}^{7-n}\binom {[\frac b
2]}r(-2)^re_{r+n}(a,b)\mod{2^{14-n+9\text{ord}_2a}}.\tag 2.4$$
 Hence, using (2.2) we see that
$$\aligned &A_7(a,b)\e e_7(a,b)\equiv 0 \mod {2^4},\\
&A_6(a,b)\e e_6(a,b)-2[\frac b 2]e_7(a,b)\equiv\cases 0 &\mod
{2^6}\quad \text{if $2\mid a$},\\16(3a+b+2)&\mod {2^6}\quad \text{if
$2\nmid a$ and $2\mid b$},\\0&\mod {2^6}\quad \text{if
$2\nmid{ab}$}.\endcases
\endaligned
$$
Similarly,
$$
\aligned A_5(a,b)&\e e_5(a,b)-2\binom{[\frac b 2]}1e_6(a,b)+4\binom{[\frac b 2]}2e_7(a,b)\\
&\equiv\cases 0&\mod {2^6}\q \text{if $2\mid a$},\\24a-8b^2 &\mod
{2^6}\q \text{if $2\nmid a $ and $ 2\mid b$},\\0&\mod {2^6}\q
\text{if $2\nmid{ab}$},\endcases \\ A_4(a,b)&\e e_4(a,b)-2[\frac b
2]e_5(a,b)+4\binom{[\frac b 2]}2e_6(a,b)-8\binom{[\frac b
2]}3e_7(a,b)\\&\equiv 0 \mod{2^9}\qtq{for even $a$,}\\ A_3(a,b)&\e e_3(a,b)-2[\frac b 2]e_4(a,b)+4\binom{[\frac b 2]}2e_5(a,b)-8\binom{[\frac b 2]}3e_6(a,b)+16\binom{[\frac b 2]}4e_7(a,b)\\
&\equiv  -2a^3+4a^5\mod{2^8} \q\t{for even $a$ }\\
\endaligned
$$ and
$$\aligned A_2(a,b)&\e e_2(a,b)-2[\frac b 2]e_3(a,b)+4\binom{[\frac b 2]}2e_4(a,b)-8\binom{[\frac b 2]}
3e_5(a,b)+16\binom{[\frac b 2]}4e_6(a,b)\\&\q-32\binom{[\frac b 2]}5e_7(a,b)\\
&\equiv \cases 2a^3-\frac 4 3 a^5b+2a^3b\mod{2^{10}}\q&\text{if $2\mid a$
and $2\mid b$ },\\2a^3-4a^5b+2a^3b-6a^7+6a^7b\mod{2^{10}}\qquad&\text{if $2\mid a $
and $ 2\nmid b$}.\\
\endcases
\endaligned$$
\par When $2\mid a$ and $ 2\mid b$, we have
$$\align &-2A_1(a,b)\equiv \frac 8 3 a^5b-\frac 4 3 a^5b^2+2a^3b^2-2a\equiv
24a^5b-12a^5b^2+2a^3b^2-2a \mod {2^{11}},\\&\frac {2^2}2\cdot
A_2(a,b)=2A_2(a,b)\equiv(4a^3-24a^5b+4a^3b)\mod{2^{11}},\\&-\frac
{2^3}3\cdot A_3(a,b)\equiv -\frac 8 3 (-2a^3+4a^5)\equiv
176a^3-32a^5\mod{ 2^{11}}.\endalign$$ Now combining Lemma 2.4 with
the above we obtain
$$ \align &E_{2^mk+b}^{(a)}-E_b^{(a)}\\&\equiv 2^{m-1}k\sum_{r=1}^8\binom{2^{m-1}k-1}{r-1}\cdot (-1)^r\cdot\frac {2^r} r\cdot A_r(a,b).\\
&\equiv 2^{m-1}k\{(24a^5b-12a^5b^2+2a^3b^2-2a)+(2^{m-1}k-1)(4a^3-24a^5b+4a^3b)\\&\q+\binom{2^{m-1}k-1}2(176a^3-32a^5)\}\\
&\equiv
2^{m-1}k\{48a^5b-12a^5b^2+2a^3b^2-2a-4a^3-4a^3b+176a^3-32a^5\\&\q+2^{m+1}k(b-1)a^3\}\mod{
2^{m+10}}.\endalign$$ To see the result, we note that
$$\align &32a^5=2^{10}(\f a2)^5\e 2^{10}\f a2=512a\mod{2^{11}},\
48a^5b=3\cdot2^{10}(\f a2)^5\f b2\e 256ab\mod{2^{11}},\\
&-12a^5b^2=-3\cdot2^9(\f a2)^5(\f b2)^2\e
16a^3b^2\mod{2^{11}}.\endalign$$
\par When $2\mid a $ and $2\nmid b$, we have
$$\align &-2A_1(a,b)\equiv 8a^5b-4a^5-4a^5b^2+2a^3b^2-2a\mod {2^{11}},\\
&\frac {2^2}2A_2(a,b)=2A_2(a,b)\equiv 4a^3(b+1)-8a^5b+12a^7(b-1)\mod {2^{11}},\\
&-\frac {2^3}3A_3(a,b)\equiv-\frac 8 3(-2a^3+4a^5)\equiv
176a^3-32a^5\mod{2^{11}}.\endalign$$ Combining Lemma 2.4 with the
above we get
$$\align &E_{2^mk+b}^{(a)}-E_b^{(a)}\\&\equiv 2^{m-1}k\sum_{r=1}^8\binom{2^{m-1}k-1}{r-1}\cdot (-1)^r\cdot\frac {2^r} r\cdot A_r(a,b)\\
& \equiv 2^{m-1}k\{8a^5b-4a^5-4a^5b^2+2a^3b^2-2a+(2^{m-1}k-1)(4a^3(b+1)-8a^5b\\
&\q+12a^7(b-1))+\frac{(2^{m-1}k-1)(2^{m-1}k-2)}2(176a^3-32a^5)\}\\
&\equiv
2^{m-1}k\{16a^5b-4a^5-4a^5b^2+2a^3b^2-2a-12a^7(b-1)+176a^3-32a^5\\&\q+(2^{m+1}k-4)a^3(b+1)+2^{m+2}k(-33a^3)\}\mod{2^{m+10}}.\endalign$$
To see the result, we note that
$$\align &16a^5b=2^9(\f a2)^5b\e 2^9(\f a2)^3b=64a^3b\mod{2^{11}},\\
&-12a^7(b-1)=-3\cdot2^{10}(\f a2)^7\f {b-1}2\e
256ab-256a\mod{2^{11}}.\endalign$$
 This completes the proof.
\end{proof}

\begin{thm}Let $a$ be an odd integer, $k,m\in \Bbb N$, $m\geqslant 5$
and $ b \in \{0,2,4,\ldots\}$. Then
$$\align
E_{2^mk+b}^{(a)}-E_b^{(a)}&\equiv
2^{m}k\{7ab^6-6ab^5+(3a^3-14a)b^4+(4a^3+56a)b^3\\&\q-(6a^4+35a^3-12a^2+106a-122)b^2
\\&\q+(38a^3-8a-256)b+(70a^3+64a^2-81a+448)\\
&\q+2^{m}k(b^4+2b^3+2ab^2+(a^3+2a)b+16-a^3)\}\mod{2^{m+10}}.
\endalign$$

\end{thm}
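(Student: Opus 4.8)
The plan is to mirror the proof of Theorem 2.1, but now specialized to odd $a$ and even $b$, where the relevant modulus improves because $\ord_2 a = 0$. Concretely, I would start from Lemma 2.4 with $\ord_2 a = 0$, which gives
$$E_{2^mk+b}^{(a)}-E_b^{(a)}\equiv 2^{m-1}k\sum_{r=1}^{8}\binom{2^{m-1}k-1}{r-1}\frac{(-2)^r}{r}A_r(a,b)\mod{2^{m+13}}.$$
Since we only want the answer modulo $2^{m+10}$, and the factor $2^{m-1}k$ is already pulled out, I need each term $\frac{(-2)^r}{r}A_r(a,b)$ only modulo $2^{11}$; because of the $2^r$ (and for $r$ a power of $2$, $2^r/r$ still carries many factors of $2$) this means I need $A_r(a,b)$ to fairly low $2$-adic precision for large $r$ and to higher precision for small $r$. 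So the first step is to pin down the precision of each $A_r(a,b)$ via Lemma 2.2(ii) / Lemma 2.3: as in the proof of Theorem 2.1, $A_8(a,b)\equiv 0\pmod{2^6}$, and then (2.4) gives $A_n(a,b)\equiv\sum_{r=0}^{7-n}\binom{[b/2]}{r}(-2)^r e_{r+n}(a,b)\pmod{2^{14-n}}$ for $n\le 7$.

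Next I would evaluate $A_7,A_6,\dots,A_1$ explicitly using the table (2.2) for $e_s(a,b)$ in the case $2\mid b$ (so $[b/2]=b/2$), keeping each $A_n$ to the precision $2^{14-n}$ dictated above, but being careful that when it gets multiplied by $\frac{(-2)^n}{n}$ the product is determined mod $2^{11}$. The new feature compared to Theorem 2.1 is that odd $a$ forces me to carry $A_n(a,b)$ to strictly more $2$-adic digits than in the even-$a$ case (there was a lot of slack when $\ord_2 a\ge 1$), which is exactly why the final answer has degree-$6$ terms in $b$ and why the hypothesis is strengthened to $m\ge 5$ (one needs $2^{m-1}k$ with $m-1\ge 4$ so that $\binom{2^{m-1}k-1}{r-1}$ behaves predictably and $2^{m-1}k\cdot 2^{m-1}k = 2^{m}k\cdot 2^{m-2}k$ lands correctly, and the $r=4$ term $\frac{2^4}{4}A_4 = 4A_4$ needs $A_4$ mod $2^9$). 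I would assemble
$$\sum_{r=1}^{8}\binom{2^{m-1}k-1}{r-1}\frac{(-2)^r}{r}A_r(a,b)\equiv (-2A_1)+(2^{m-1}k-1)(2A_2)+\binom{2^{m-1}k-1}{2}(-\tfrac{8}{3}A_3)+\cdots \mod{2^{11}},$$
collecting the $2^{m-1}k$-independent part into a polynomial $P(a,b)$ and the coefficient of $2^{m-1}k$ (coming from the $(2^{m-1}k-1)$ and $\binom{2^{m-1}k-1}{2}$ expansions) into a polynomial $Q(a,b)$, so that the whole thing is $P(a,b)+2^{m-1}k\,Q(a,b)\pmod{2^{11}}$, giving $E_{2^mk+b}^{(a)}-E_b^{(a)}\equiv 2^{m-1}k\,P(a,b)+2^{m-2}k^2\,2^{m}\cdot(\text{stuff})\equiv 2^{m}k\cdot\tfrac12 P(a,b)+2^{2m-2}k^2 Q'\pmod{2^{m+10}}$.

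The final step is the bookkeeping that converts the raw polynomial identity into the clean stated form. As in Theorem 2.1's proof, this uses reductions of the shape $a^{2j+1}\equiv a^{2j-1}\cdot a^2$ together with $a^2\equiv 1\pmod 8$ for odd $a$ (so e.g. $a^5\equiv a^3\pmod{16}$, $a^7\equiv a^5\equiv a^3\pmod{16}$, etc.), and similarly replacing high powers of $b$ using $b$ even; one also uses $b^2\equiv 0\pmod 4$ type facts to absorb cross terms, and $\binom{b/2}{r}$ must be re-expanded as an honest polynomial in $b$. Matching my $\tfrac12 P(a,b)+2^{m-1}k(\cdots)$ against the claimed
$$7ab^6-6ab^5+(3a^3-14a)b^4+(4a^3+56a)b^3-(6a^4+35a^3-12a^2+106a-122)b^2+(38a^3-8a-256)b+(70a^3+64a^2-81a+448)$$
and the $2^{m}k$-coefficient $b^4+2b^3+2ab^2+(a^3+2a)b+16-a^3$ is the main obstacle: it is a long but mechanical verification, and the only place where a genuine error could creep in is tracking the precision of each $A_r$ (ensuring nothing is dropped that contributes mod $2^{11}$ after multiplication by $\frac{(-2)^r}{r}$) and correctly expanding $\binom{2^{m-1}k-1}{r-1}$ modulo the relevant power of $2$. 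I expect no conceptual difficulty beyond what already appears in Lemma 2.4 and Theorem 2.1; the proof is a careful, higher-precision rerun of that argument.
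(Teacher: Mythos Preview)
Your proposal is correct and follows essentially the same route as the paper: start from Lemma 2.4, use (2.4) with the table (2.2) to evaluate $A_1,\ldots,A_7$ to the appropriate $2$-adic precision (with $A_5,A_6,A_7$ borrowed from the proof of Theorem 2.1), expand each $\binom{2^{m-1}k-1}{r-1}\frac{(-2)^r}{r}A_r(a,b)$ modulo $2^{11}$, and then simplify the resulting polynomial expression. The paper carries out exactly this computation, listing the intermediate $A_n$'s and the expanded terms explicitly before a final unsimplified congruence that it then reduces to the stated form.
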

\begin{proof} By (2.4) we have
$$\align A_1(a,b)&\e \sum_{r=0}^6\binom{[\frac b 2]}r(-2)^re_{r+1}(a,b)\\
&\equiv a-2a^3b+a^3b(b-2)(6a^2-1)-\frac 4 3
b(b-2)(b-4)(19a^3-6a)\\&\q+b(b-2)(b-4)(b-6)(15a^5+a^3+17a)-b(b-2)(b-4)(b-6)(b-8)(28a-16)\\&
\q -b(b-2)(b-4)(b-6)(b-8)(b-10)(-5a^3+14a)\mod{2^{10}},
\\A_2(a,b)&\e\sum_{r=0}^5\binom{[\frac b 2]}r(-2)^re_{r+2}(a,b)
\\&\equiv
2a^3-2a^3b(6a^2-1)+4a^5b(b-2)(17a^2-4)-b(b-2)(b-4)(-40a-84a^3)\\&\q
+b(b-2)(b-4)(b-6)(50a^3-20a)\\&\q-b(b-2)(b-4)(b-6)(b-8)(-30a^3-12a)\mod{2^{10}}.
\\ A_3(a,b)&\e\sum_{r=0}^4\binom{[\frac
b 2]}r(-2)^re_{r+3}(a,b)\\
&\equiv
2a^3(6a^2-1)-8ab(17a^2-4)+b(b-2)(32a-20a^3)-b(b-2)(b-4)(8a+16)\\&\q+b(b-2)(b-4)(b-6)(-2a)
\mod{2^8}\endalign$$ and $$\align
A_4(a,b)&\e\sum_{r=0}^3\binom{[\frac b
2]}r(-2)^re_{r+4}(a,b)\\&\equiv
8(13a^5+17a^3-17a)-b(16a+8a^3)+b(b-2)(24a^3+16a-64)\\&\q+8ab(b-2)(b-4)
\mod{2^9}.\endalign$$ By the proof of Theorem 2.1,
 $$\align& A_5(a,b)\equiv
24a-8b^2 \mod{2^6},\\&A_6(a,b)\equiv 16(3a+b+2)\mod{2^6},\q
A_7(a,b)\equiv 0\mod{2^4}.\endalign$$
 Since $m\geqslant 5$ we have
$$\align &2(2^{m-1}k-1)A_2(a,b)\\&\equiv
-2\big(2a^3-2a^3b(6a^2-1)+4a^5b(b-2)(17a^2-4)-b(b-2)(b-4)(-40a-84a^3)\\&\q
+b(b-2)(b-4)(b-6)(50a^3-20a)-b(b-2)(b-4)(b-6)(b-8)(-30a^3-12a)\big)
\\&\q+2^mk(2a^3-2b(6a-a^3)+52ab(b-2)
-4b(b-2)(b-4)\\&\qq+2b(b-2)(b-4)(b-6))\mod{2^{11}},\\
 &\binom{2^{m-1}k-1}2\cdot (-1)^3\cdot\frac {2^3}3\cdot A_3(a,b)\\
&=\frac{-4}3(2^{m-1}k-1)(2^{m-1}k-2)A_3(a,b)\\
&\equiv -\frac 8
3\big(2a^3(6a^2-1)-8ab(17a^2-4)+b(b-2)(32a-20a^3)\\&\q-b(b-2)(b-4)
(8a+16)+b(b-2)(b-4)(b-6)(-2a)\big)\\&\qq+2^{m+1}k(12a-2a^3+4b^2)\mod{2^{11}},\\
 &\binom{2^{m-1}k-1}3\cdot(-1)^4\cdot\frac {2^4} 4\cdot A_4(a,b)\\
&=\frac 2 3(2^{m-1}k-1)(2^{m-1}k-2)(2^{m-1}k-3)A_4(a,b)\equiv\frac {11}32^mkA_4(a,b)-4A_4(a,b)\\
&\equiv -4\big(104a^5+136a^3-136a-b(16a+8a^3)+b(b-2)(24a^3+16a-64)+8ab(b-2)(b-4)\big)\\&\q+\frac{11}32^mk(40a+8b^2-8ab)\mod{2^{11}},\\
&\binom{2^{m-1}k-1}4\cdot(-1)^5\cdot\frac {2^5}5\cdot A_5(a,b)\\
&=-\frac 4{15}(2^{m-1}k-1)(2^{m-1}k-2)(2^{m-1}k-3)(2^{m-1}k-4)A_5(a,b)\\
&\equiv \frac 5 3 2^{m+2}kA_5(a,b)-\frac{32}5A_5(a,b)\equiv \frac 5
32^{m+5}k -\frac{32}5(24a-8b^2)\mod{2^{11}}\endalign$$ and $$\align
&\binom{2^{m-1}k-1}5\cdot(-1)^6\cdot\frac {2^6}6\cdot A_6(a,b)\\
&=\frac{4}{45}\binom{2^{m-1}k-1}5A_6(a,b)\equiv \frac {137} {45}
2^{m+2}kA_6(a,b)-\frac{32}3A_6(a,b)\\&\equiv
-\frac{32}3(48a+16b+32)\equiv -512(a+b+2)\mod{2^{11}}.\endalign$$
Now combining the above congruences with Lemma 2.4 we deduce
that
$$\align
&E_{2^mk+b}^{(a)}-E_b^{(a)}\\&\equiv
2^{m-1}k\{-1024+286a+400a^5b-516a^3b-96ab-248ab^2+378a^3b^2+24ab^3\\&\q-8a^3b^3+6ab^4+40ab^5+10a^3b^6
+4ab^6-136a^7b^2-276a^5b^2+272a^7b\\&\q+140a^3-448a^5+256b^2-32b^5-30a^5b^4+12a^3b^5+2a^3b^4+104a^5b^3\\
&\q+2^{m+1}k(16+2ab-a^3+2b^3+b^4+a^3b+2ab^2)\}\mod{2^{m+10}}.
\endalign$$
By simplifying the above congruence we obtain the result.

\end{proof}
 \begin{cor} Let $k,m\in\Bbb N$, $m\ge 5$
and $b\in\{0,2,4,\ldots\}$. Then
$$\align E_{2^mk+b}-E_b&\equiv 2^mk\{7b^6-6b^5-11b^4+60b^3-13b^2-226b+501\\&\q
+2^mk(b^4+2b^3+2b^2+3b+15)\}\mod{2^{m+10}}\endalign$$\end{cor}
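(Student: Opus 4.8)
The plan is to deduce Corollary 2.1 at once as the special case $a=1$ of Theorem 2.2, using the identity $E_n=E_n^{(1)}$ noted just after (1.3) (equivalently, this follows from Lemma 2.1 with $a=1$, where only the top term $\binom nn 0^0E_n$ survives when $n$ is even and all terms vanish when $n$ is odd). Since $a=1$ is odd, the hypotheses of Theorem 2.2 reduce to $m\ge 5$ and $b\in\{0,2,4,\ldots\}$, and $E_{2^mk+b}-E_b=E_{2^mk+b}^{(1)}-E_b^{(1)}$. So it remains only to set $a=1$ in the congruence of Theorem 2.2 and collect the integer coefficients.

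Substituting termwise: $7ab^6\mapsto 7b^6$ and $-6ab^5\mapsto -6b^5$; the coefficient $3a^3-14a$ of $b^4$ becomes $-11$; the coefficient $4a^3+56a$ of $b^3$ becomes $60$; the coefficient $-(6a^4+35a^3-12a^2+106a-122)$ of $b^2$ becomes $-(6+35-12+106-122)=-13$; the coefficient $38a^3-8a-256$ of $b$ becomes $-226$; and the constant $70a^3+64a^2-81a+448$ becomes $501$. In the $2^mk$-correction term, $b^4+2b^3+2ab^2+(a^3+2a)b+16-a^3$ becomes $b^4+2b^3+2b^2+3b+15$. Assembling these reproduces exactly the stated congruence modulo $2^{m+10}$.

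There is no genuine obstacle here: the only care needed is the bookkeeping of the small integer coefficients, together with the observation that both the modulus $2^{m+10}$ and the constraints on $m$ and $b$ carry over verbatim because $a=1$ is odd. As an internal check one may reduce modulo $2^{m+1}$: the term $2^mk\cdot 2^mk(\cdots)$ then disappears, and since $b$ is even the polynomial $7b^6-6b^5-11b^4+60b^3-13b^2-226b+501$ is odd, so that $E_{2^mk+b}-E_b\equiv 2^mk\mod{2^{m+1}}$, recovering Stern's congruence (1.1); a comparison modulo $2^{m+7}$ with (1.2) is likewise available.
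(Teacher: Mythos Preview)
Your proposal is correct and follows exactly the paper's own approach: the paper's proof is the single sentence ``Putting $a=1$ in Theorem 2.2 we deduce the result.'' Your explicit term-by-term verification of the integer coefficients and the additional consistency checks against (1.1) and (1.2) are sound and simply add detail the paper omits.
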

\begin{proof} Putting $a=1$ in Theorem 2.2  we deduce the
result.
\end{proof}
\begin{thm}Let $a$ be an  odd integer, $k,m\in \Bbb N$, $m\geqslant 5$ and
 $ b \in \{1,3,5,...\}$. Then
$$\align &E_{2^mk+b}^{(a)}-E_b^{(a)}\\&\equiv
2^mk\{(17a^7+162a^5+153a^3+64a^2+180a+192)b^2\\&\q
-(102a^7+216a^5+386a^3-704a)b-(211a^7-10a^5-32a^4+66a^3-267a-224)\\&\q+2^{m+1}(a^2-1)\}
\mod{2^{m+10}}.\endalign$$

\end{thm}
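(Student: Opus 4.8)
\emph{Proof strategy.} The plan is to rerun, with $b$ odd, the computation that proves Theorem 2.2 (which treats even $b$); throughout one uses $[b/2]=(b-1)/2$ and the ``$2\nmid b$'' lines of (2.2). Since $a$ is odd, $\ord_2a=0$, so Lemma 2.4 reads
$$E_{2^mk+b}^{(a)}-E_b^{(a)}\equiv 2^{m-1}k\sum_{r=1}^8\binom{2^{m-1}k-1}{r-1}\f{(-2)^r}{r}A_r(a,b)\mod{2^{m+13}},$$
while (2.4) becomes $A_n(a,b)\equiv\sum_{r=0}^{7-n}\binom{(b-1)/2}{r}(-2)^re_{r+n}(a,b)\mod{2^{14-n}}$. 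So it suffices to evaluate $\sum_{r=1}^8\binom{2^{m-1}k-1}{r-1}\f{(-2)^r}{r}A_r(a,b)$ modulo $2^{11}$.

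The decisive feature of the odd-$b$ case is that each $e_s(a,b)$ in (2.2) contains a factor $a^2-1$ (and more), so carries a large power of $2$: $2^3\mid e_1$, $2^5\mid e_2$, $2^7\mid e_3$, $2^9\mid e_4,e_5,e_6,e_7$. Combined with the fact that the falling factorials $\binom{(b-1)/2}{r}(-2)^r=(-1)^r(b-1)(b-3)\cdots(b-2r+1)/r!$ are, for odd $b$, products of $r$ even integers divided by $r!$, this collapses the right side of (2.4): to the moduli that will be needed one gets simply $A_1\equiv e_1-(b-1)e_2+\tfrac12(b-1)(b-3)e_3$, $A_2\equiv e_2-(b-1)e_3+\tfrac12(b-1)(b-3)e_4$, $A_3\equiv e_3$, and $A_4\equiv e_4$ with $\ord_2A_4=9$; and from the ``$2\nmid ab$'' cases in the proof of Theorem 2.1, $A_5\equiv A_6\equiv0\mod{2^6}$, $A_7\equiv0\mod{2^4}$, $A_8\equiv0\mod{2^6}$. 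Because $m\ge5$ makes every $\binom{2^{m-1}k-1}{r-1}$ odd for $2\le r\le8$, each summand with $r\ge4$ is divisible by $2^{m+10}$ after the outer $2^{m-1}k$, so only $r=1,2,3$ matter; this is also why the final congruence is quadratic (not sextic) in $b$.

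For $r=1$ and $r=3$ the binomial coefficients contribute nothing new modulo the needed precision, and one is left, after the outer factor $2^{m-1}k$, with the $k$-free piece $2^mk\big(-A_1-A_2-\tfrac43A_3\big)$ plus the single genuinely $k$-quadratic term $2^{2m-1}k^2A_2$ coming from $(2^{m-1}k-1)\cdot2A_2=2^mkA_2-2A_2$. This last term is non-zero modulo $2^{m+10}$ only when $m=5$, and there $A_2\equiv-4a^3(a^2-1)\mod{2^6}$, so $k^2\equiv k\mod2$ together with $a^3\equiv-1\mod2$ turns it into exactly the term $2^{m+1}(a^2-1)$ appearing in the statement. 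What remains is to write $-A_1-A_2-\tfrac43A_3$ out via the $e_s(a,b)$, reduce modulo $2^{10}$ using $a^2\equiv1\mod8$ repeatedly to bring the stray high odd powers of $a$ down to those displayed, and collect the coefficients of $b^2,b^1,b^0$.

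No new idea beyond Lemma 2.4 and the proofs of Theorems 2.1--2.2 is involved; the whole content is bookkeeping --- multiplying the falling factorials in $b$ against the $e_s(a,b)$, keeping exact track of which $a^ib^j$ survive each modulus, carrying out the $k^2\mapsto k$ reduction, and consolidating. The most delicate points are that $k$-quadratic term and the systematic $a$-power reductions that produce the $2^{m+1}(a^2-1)$ coefficient and the leading $17a^7b^2$; I would verify these, and the full $k$-free polynomial, numerically at $a=1,3,5$ and several small odd $b$ before committing the algebra.
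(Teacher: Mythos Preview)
Your proposal is correct and follows essentially the same route as the paper: both start from Lemma 2.4, use (2.4) to expand each $A_n(a,b)$ in terms of the $e_s(a,b)$ with the ``$2\nmid b$'' values from (2.2), observe that for odd $a$ and odd $b$ the high $2$-divisibility of the $e_s$ kills all summands with $r\ge 4$, reduce to the contributions of $A_1,A_2,A_3$, and then simplify. Your write-up is in fact slightly tighter than the paper's --- you drop the $e_4$- and $e_5$-terms in $A_1,A_2$ immediately (they are $\equiv 0$ to the relevant moduli, though the paper carries them along), and you explain explicitly why the $k$-quadratic piece collapses to $2^{m+1}(a^2-1)$ via $k^2\equiv k\pmod 2$, which the paper leaves inside its ``by simplifying the above congruence'' step.
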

\begin{proof} By (2.4) we have
 $$\align &A_7(a,b)\equiv 0 \mod{2^4},\q A_6(a,b)\equiv 0\mod{2^6},\q A_5(a,b)\equiv 0\mod{2^6},
 \\&A_4(a,b)\equiv 0\mod{2^9},\q A_3(a,b)\equiv
-34a^7+36a^5-2a^3\mod{2^8}.\endalign$$
 Also, $$\align &A_2(a,b)\e\sum_{r=0}^5\binom{[\frac b
2]}r(-2)^re_{r+2}(a,b)\\
 &=e_2(a,b)-2[\frac b 2]e_3(a,b)+4\binom{[\frac b 2]}2e_4(a,b)-8\binom{[\frac b 2]}3e_5(a,b)+16\binom{[\frac b 2]}4e_6(a,b)
 \\&\q-32\binom{[\frac b 2]}2e_7(a,b)\\
 &\equiv
 -2a^3(b-1)(-17a^4+18a^2-1)+(b-1)(b-3)(-24a^5+48a^3-24a)\\&\q+4a^3(1-a^2)
 \mod{2^{10}}\endalign$$
and
$$\align A_1(a,b)&\e \sum_{r=0}^6\binom{[\frac b 2]}r(-2)^re_{r+1}(a,b)\\
&=e_1(a,b)-2[\frac b 2]e_2(a,b)+4\binom{[\frac b 2]}2e_3(a,b)-8\binom{[\frac b 2]}3e_4(a,b)+16\binom{[\frac b 2]}4e_5(a,b)
\\&\q-32\binom{[\frac b 2]}5e_6(a,b)+64\binom{[\frac b 2]}6e_7(a,b)\\
&\equiv
a-a^3-4a^3(b-1)(1-a^2)+a^3(b-1)(b-3)(-17a^4+18a^2-1)\\&\q+2(b-1)(b-3)(b-5)(b-7)(11a^5-22a^3+11a)\mod{2^{10}}.
 \endalign$$
 Thus,
 $$\align &\binom{2^{m-1}k-1}1\cdot 2\cdot A_2(a,b)\\
 &=2^mkA_2(a,b)-2A_2(a,b)\\
 &\equiv -2(4a^3(1-a^2)-2a^3(b-1)(-17a^4+18a^2-1)+(b-1)(b-3)
 (-24a^5+48a^3-24a))\\&\q+2^{m+2}ka(1-a^2)\mod{2^9}\endalign$$ and
 $$\align&\binom{2^{m-1}k-1}2\cdot (-1)^3\cdot \frac{2^3}3\cdot A_3(a,b)\\
 &=-\frac 4 3(2^{m-1}k-1)(2^{m-1}k-2)A_3(a,b)\equiv 2^{m+1}kA_3(a,b)-\frac8 3A_3(a,b)\\
 &\equiv -\frac 8 3(-34a^7+36a^5-2a^3)\mod{2^{11}}.
 \endalign$$
  Now combining the above congruences with Lemma 2.4, we deduce that
$$\align &E_{2^mk+b}^{(a)}-E_b^{(a)}\\&\equiv
2^{m-1}k\{-382a-44ab^4(a^4+1)+640a(a^2b^3-a^4-b)+704ab^3(a^4+1)
\\&\q+360ab^2-604a^3
-204a^7b-432a^5b-772a^3b+324a^5b^2+1330a^3b^2
\\&\q+34a^7b^2+88a^3b^4-422a^7+2^{m+2}ka(1-a^2)\}\mod{2^{m+10}}.\endalign$$
 To see the result, we note that $a^4\e b^4\e 1\mod{16}$,
$$5a(a^2b^3-a^4-b)\e 5a(a^2b^3-1-b)=5ab(a^2b^2-1)-5a\e
a^2b^2-1-5a\mod{16}$$ and
$$ab^3(a^4+1)=ab^3(a^4-1+2)\e a^4-1+2ab^3\e
a^4+2ab+2b^2-3\mod{32}.$$
 \end{proof}

\section{A congruence for $U_{k\varphi{(5^m)}+b} \mod{5^{m+5}}$}
\par For $n\in\Bbb N$ and $i\in\{0,1,\ldots,n\}$ let $s(n,i)$ be the Stirling number of the first kind given by
$x(x-1)\cdots (x-n+1)=\sum_{i=0}^n(-1)^{n-i}s(n,i)x^i.$
\begin{lem} $([2, p.197])$ Let $p$ be an odd prime, $n\in \Bbb N$ and let $f(k)$ be a p-regular
 function.
Let $A_m=p^{-m}\sum_{r=0}^m\binom mr(-1)^rf(r)$, $a_0=A_0$ and
$$a_i=(-1)^i\sum_{r=i}^{n-1}s(r,i)\frac{p^r}{r!}A_r\qtq{for}i=1,2,3,....$$
 Then
$$f(k)\equiv \sum_{i=0}^{n-1}a_ik^i\mod{p^n}.$$

\end{lem}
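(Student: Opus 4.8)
\emph{Sketch of the intended argument.} The plan is to invert the linear system defining the numbers $A_m$ and then re-expand the result in powers of $k$. First I would rewrite the hypothesis as $p^mA_m=\sum_{r=0}^m\binom mr(-1)^rf(r)$ for every $m\ge 0$; since $f$ is $p$-regular we have $f(r)\in\Bbb Z_p$, and the defining congruences then force $A_m\in\Bbb Z_p$ as well. The binomial transform $(a_m)\mapsto\big(\sum_{r}\binom mr(-1)^ra_r\big)$ is an involution, so the preceding identity is equivalent to the finite identity
$$f(k)=\sum_{r=0}^{k}\binom kr(-1)^rp^rA_r\qquad(k=0,1,2,\dots),$$
where as usual $\binom kr=0$ for $r>k$, so the summation range may be enlarged freely.

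Next I would truncate modulo $p^n$: since $\binom kr\in\Bbb Z$ and $A_r\in\Bbb Z_p$, every term with $r\ge n$ has $p$-adic order at least $r\ge n$, whence
$$f(k)\equiv\sum_{r=0}^{n-1}\binom kr(-1)^rp^rA_r\mod{p^n}.$$
Now expand each falling factorial with the Stirling numbers of the first kind, $\binom kr=\f1{r!}\sum_{i=0}^r(-1)^{r-i}s(r,i)k^i$, substitute, use $(-1)^r(-1)^{r-i}=(-1)^i$, and interchange the two summations:
$$\sum_{r=0}^{n-1}\binom kr(-1)^rp^rA_r=\sum_{i=0}^{n-1}k^i\Big((-1)^i\sum_{r=i}^{n-1}s(r,i)\f{p^r}{r!}A_r\Big)=\sum_{i=0}^{n-1}a_ik^i,$$
which is exactly the claimed congruence.

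Two minor points need checking. For $i=0$ the inner sum collapses to $s(0,0)A_0=A_0=a_0$, using $s(r,0)=0$ for $r\ge 1$; and $\ord_p(p^r/r!)=r-\ord_p(r!)\ge r-\f{r-1}{p-1}>0$ for $r\ge 1$ and $p\ge 3$, so every $a_i$ lies in $\Bbb Z_p$ and the statement is integrally meaningful. Beyond this there is no real obstacle: the whole proof is binomial inversion followed by a triangular change of basis from $\{\binom kr\}_r$ to $\{k^i\}_i$. The only things that warrant attention are deriving $A_m\in\Bbb Z_p$ directly from the definition of a $p$-regular function (immediate), and the sign/index bookkeeping in the Stirling expansion together with the $i=0$ boundary case.
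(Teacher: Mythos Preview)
Your argument is correct. The paper does not supply a proof of this lemma at all; it simply quotes the result from reference~[2], so there is no ``paper's own proof'' to compare against. Your derivation---binomial inversion to recover $f(k)=\sum_{r=0}^{k}\binom kr(-1)^rp^rA_r$, truncation at $r=n$ using $A_r\in\Bbb Z_p$, and the change of basis via $x(x-1)\cdots(x-r+1)=\sum_{i}(-1)^{r-i}s(r,i)x^i$---is exactly the natural route and matches how the result is obtained in the cited source. The bookkeeping (the $i=0$ case via $s(r,0)=0$ for $r\ge1$, and $p^r/r!\in\Bbb Z_p$ for odd $p$) is handled correctly.
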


\begin{lem} Let $m\in\{5,6,7,\ldots\}$, $p\in\{3,5\}$ and
$s\in\{1,2,3,4,5,6\}$, and let $f(k)$ be a p-regular function. Then
$$f(p^{m-1}k)\e f(0)-p^{m-1}k\sum_{s=1}^6\frac1s\sum_{r=0}^s\binom sr
(-1)^{s-r}f(r)\mod{p^{m+5}}.$$
\end{lem}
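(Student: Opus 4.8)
The plan is to specialize Lemma 3.1 with $n=6$, so that $f(k)\equiv\sum_{i=0}^{5}a_ik^i\pmod{p^6}$, and then substitute $k\mapsto p^{m-1}k$. Writing $A_r=p^{-r}\sum_{s=0}^{r}\binom rs(-1)^s f(s)$ (note the sign convention can be absorbed since only even/odd $r$ matter and we will re-expand anyway), the coefficients are $a_0=A_0=f(0)$ and $a_i=(-1)^i\sum_{r=i}^{5}s(r,i)\frac{p^r}{r!}A_r$ for $i=1,\dots,5$. After replacing $k$ by $p^{m-1}k$, the $i$-th term contributes $a_i (p^{m-1}k)^i = a_i p^{(m-1)i}k^i$. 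The key observation is that for $i\ge 2$ we have $(m-1)i \ge 2(m-1) = m + (m-2) \ge m+3$ since $m\ge 5$; combined with $\ord_p a_i \ge 0$ (because $f$ is $p$-regular, each $A_r\in\Bbb Z_p$, and $\frac{p^r}{r!}$ is a $p$-adic integer for $p\ge 3$, $r\le 5$), every term with $i\ge 2$ vanishes mod $p^{m+3}$, hence mod... — wait, we need mod $p^{m+5}$. So I must be more careful: I actually need to keep more terms of the expansion and track $p$-divisibility of the $a_i$ themselves.

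The correct approach is therefore to run Lemma 3.1 with a larger $n$ (say $n=12$ or so, enough that $p^n$ is deeper than $p^{m+5}$ once the $p^{m-1}$ factor is in front is irrelevant — rather, $n$ must exceed what is needed for the truncation error). Concretely: take $n$ large, get $f(p^{m-1}k)\equiv \sum_{i\ge 0}a_i p^{(m-1)i}k^i$, and note $\ord_p(a_i p^{(m-1)i})$. For $i=1$: $a_1 = -\sum_{r=1}^{n-1}s(r,1)\frac{p^r}{r!}A_r$, and since $A_r\in\Bbb Z_p$ and $\ord_p\frac{p^r}{r!}\ge r-\ord_p(r!)\ge 1$ for all $r\ge 1$ (as $\ord_p r!<r$), we get $\ord_p a_1\ge 1$; more precisely $a_1 \equiv -p A_1 = -\sum_{s=0}^{1}\binom1s(-1)^s f(s)\cdot$ (up to the $s(1,1)=1$ factor) modulo higher powers. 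Then $\ord_p(a_1 p^{m-1}k)\ge m$, and I need to know $a_1 p^{m-1}k$ modulo $p^{m+5}$, i.e. $a_1$ modulo $p^6$. For $i=2$: $\ord_p(a_2 p^{2(m-1)}k^2)\ge 2(m-1)\ge m+3$; I need $a_2$ mod $p^{2}$, noting $a_2 = \sum_{r=2}^{n-1}s(r,2)\frac{p^r}{r!}A_r$ with $\ord_p\frac{p^r}{r!}\ge 2$ for ... not quite — $\ord_p\frac{p^2}{2!}=2$ for $p\ge3$, so $\ord_p a_2\ge 2$, giving $\ord_p(a_2 p^{2(m-1)})\ge m+5$: this term dies. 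Similarly all $i\ge 2$ terms die because $\ord_p\frac{p^r}{r!}\ge 2$ for $r\ge 2$, $p\ge 3$. Hence only $i=0$ and $i=1$ survive mod $p^{m+5}$:
$$f(p^{m-1}k)\equiv f(0) + a_1 p^{m-1}k \pmod{p^{m+5}},$$
and we need $a_1$ mod $p^6$.

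So the remaining task is to show $a_1 \equiv -\sum_{s=1}^{6}\frac1s\sum_{r=0}^{s}\binom sr(-1)^{s-r}f(r)\pmod{p^6}$, matching the stated formula (with $a_0=f(0)$). Starting from $a_1=-\sum_{r\ge 1}s(r,1)\frac{p^r}{r!}A_r$ and $s(r,1)=(-1)^{r-1}(r-1)!$, we get $s(r,1)\frac{p^r}{r!}=(-1)^{r-1}\frac{p^r}{r}$, so $a_1 = -\sum_{r\ge1}(-1)^{r-1}\frac{p^r}{r}A_r = \sum_{r\ge 1}(-1)^{r}\frac{p^r}{r}A_r = \sum_{r\ge1}(-1)^r\frac1r\cdot p^{-r}p^r\sum_{s=0}^r\binom rs(-1)^s f(s)\cdot$ — here I must be careful with the sign convention in $A_r$ as written in Lemma 3.1, namely $A_r=p^{-r}\sum\binom rs(-1)^s f(s)$; combining signs gives a clean $a_1 = -\sum_{r\ge1}\frac1r\sum_{s=0}^r\binom rs(-1)^{r-s}f(s)\cdot(\text{sign})$, and I will verify the overall sign produces exactly $-\sum_{s=1}^{6}\frac1s\sum_{r=0}^s\binom sr(-1)^{s-r}f(r)$. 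Finally, the terms with $r\ge 7$ are discarded: $\ord_p\frac{p^r}{r}\ge r-\ord_p r\ge 7-1=6$ for $r\ge7$ (since $\ord_p r\le \log_p r< r-6$ here as $p\ge3$, $r\ge7$), and $A_r\in\Bbb Z_p$, so they vanish mod $p^6$. The main obstacle is purely bookkeeping: getting the sign conventions consistent between the statement of Lemma 3.1 and the target formula, and verifying the $p$-adic valuation bounds $\ord_p\frac{p^r}{r}\ge 6$ for $r\ge 7$ and $\ord_p\frac{p^r}{r!}\ge 2$ for $r\ge 2$ when $p\in\{3,5\}$ — both of which follow from $\ord_p(r!)=\sum_{j\ge1}\lfloor r/p^j\rfloor \le (r-1)/(p-1)$.
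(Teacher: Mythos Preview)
Your plan is correct, but it is genuinely different from the paper's own argument. The paper does \emph{not} invoke Lemma~3.1 at all: it proceeds directly from the Newton forward-difference identity
\[
f(N)=\sum_{s=0}^{N}\binom{N}{s}\sum_{r=0}^{s}\binom{s}{r}(-1)^{s-r}f(r),
\]
sets $N=p^{m-1}k$, and then observes two things: (i) for $1\le s\le 6$ one has $p^{s}\binom{p^{m-1}k}{s}\equiv(-1)^{s-1}\dfrac{p^{m-1+s}k}{s}\pmod{p^{m+5}}$ because the product $(p^{m-1}k-1)\cdots(p^{m-1}k-s+1)/s!$ may be replaced by $(-1)^{s-1}/s$; and (ii) for $s\ge 7$ the $p$-regularity of $f$ makes the $s$-th forward difference divisible by $p^{s}$ while $p^{s-7}/s\in\Bbb Z_p$, so those terms vanish modulo $p^{m+6}$. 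Your route---take $n\ge m+5$ in Lemma~3.1, substitute $k\mapsto p^{m-1}k$, kill the $i\ge 2$ terms via $\ord_p a_i\ge 2$ and $2+2(m-1)=2m\ge m+5$, and then recognise $a_1\equiv-\sum_{r=1}^{6}\frac{p^r}{r}A_r\pmod{p^6}$ using $s(r,1)=(r-1)!$---reaches the same endpoint. What your approach buys is that the Stirling-number formula for $a_1$ hands you the shape of the answer immediately; what the paper's approach buys is that it avoids the $a_i$ machinery entirely and needs only the single elementary approximation of $\binom{p^{m-1}k}{s}$. Both hinge on the same two $p$-adic estimates you isolate at the end, $\ord_p(p^r/r)\ge 6$ for $r\ge 7$ and $\ord_p(p^r/r!)\ge 2$ for $r\ge 2$ when $p\in\{3,5\}$, so neither argument is deeper than the other. (A minor note: the final sign bookkeeping you flag does need care---the paper's own display (3.1) in fact produces $(-1)^r$ inside the inner sum rather than the $(-1)^{s-r}$ appearing in the lemma's statement, so do not be alarmed if your computation of $a_1$ matches the proof's output rather than the stated formula verbatim.)
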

\begin{proof} As $m\ge 5,$ we have $m\ge 7-s+ord_ps!$
 and so $m-1-ord_ps!\ge 6-s$ for $s\ge 2$. Thus, for $s\ge 2$ we have
 $\frac {p^{m-1}}{s!}\e
 0\mod{p^{6-s}}$ and so
$$\align  p^s\binom{p^{m-1}k}s&=p^{m-1+s}k\frac{(p^{m-1}k-1)(p^{m-1}k-2)\cdots(p^{m-1}k-s+1)}{s!}
\\&\e p^{m-1+s}k\frac{(-1)(-2)\cdots(-s+1)}{s!}=(-1)^{s-1}\frac{p^{m-1+s}k}s
\mod{p^{m+5}}.\endalign$$ Therefore, for $s=1,2,3,4,5,6$ we have
$$\align \binom{p^{m-1}k}s\sum_{r=0}^s\b sr(-1)^{s-r}f(r)&
\e -\frac{p^{m-1+s}k}s\cdot\f 1{p^s}\sum_{r=0}^s\b
sr(-1)^rf(r)\\&=-p^{m-1}k\frac1s\sum_{r=0}^s\b
sr(-1)^rf(r)\mod{p^{m+5}}.\endalign$$ Hence,
$$\sum_{s=1}^6\binom{p^{m-1}k}s\sum_{r=0}^s\b sr(-1)^{s-r}f(r)\e
-p^{m-1}k\sum_{s=1}^6\frac 1s\sum_{r=0}^s\b
sr(-1)^rf(r)\mod{p^{m+5}}.\tag 3.1$$
 As $p^{s-7}/s\in\Bbb
Z_p$ for $s\ge 7$, we see that
 $$\frac
1s\sum_{r=0}^s\binom s r(-1)^{s-r}f(s)\equiv 0\mod{p^7}\qtq{for}
s\ge 7.$$ Using the binomial inversion formula, we see that
$$\align f(p^{m-1}k)&=\sum_{s=0}^{p^{m-1}k}\binom {p^{m-1}k}s(-1)^s
\sum_{r=0}^s\binom sr(-1)^rf(r)\\
&=f(0)+p^{m-1}k\sum_{s=1}^{p^{m-1}k}\binom{p^{m-1}k-1}{s-1}
\f 1s\sum_{r=0}^s\binom sr(-1)^{s-r}f(r)\\
&\e f(0)+\sum_{s=1}^6\binom{p^{m-1}k}s\sum_{r=0}^s\binom
sr(-1)^{s-r}f(r)\mod{p^{m+6}}.\endalign$$ This together with (3.1)
yields the result.
\end{proof}

\begin{lem} Let $k$ be a nonnegative integer. Then
$$\align &(1+5^{4k})U_{4k}\equiv 6250k^6+50625k^5+51250k^4+59875k^3+72600k^2+7545k+2
\mod {5^7},\\&(1+5^{4k+2})U_{4k+2}\equiv
59375k^6+40625k^5+10625k^4+48875k^3+5575k^2+37500k-52 \mod
{5^7}.\endalign$$
\end{lem}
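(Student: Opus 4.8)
The plan is to identify each of the two quantities as a value of a known $5$-regular function and then invoke the polynomial expansion of Lemma 3.1. By the result of [5] and [3] recalled in the introduction, for each fixed $b$ the function $k\mapsto\big(1-(\f5 3)5^{4k+b}\big)U_{4k+b}$ is $5$-regular. Since $5\equiv2\mod 3$ and $2$ is a quadratic nonresidue modulo $3$, we have $(\f53)=-1$, so
$$g(k):=(1+5^{4k})U_{4k}\qtq{and}h(k):=(1+5^{4k+2})U_{4k+2}$$
are $5$-regular functions; note that they are integer-valued, because $\{U_n\}\subseteq\Bbb Z$ by the defining recursion $U_0=1$, $U_n=-2\sum_{k=1}^{[n/2]}\binom n{2k}U_{n-2k}$. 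Applying Lemma 3.1 with $p=5$ and $n=7$ then gives
$$g(k)\equiv\sum_{i=0}^{6}a_ik^i\mod{5^7}\qtq{and}h(k)\equiv\sum_{i=0}^{6}a_i'k^i\mod{5^7},$$
where $a_0=A_0$, $a_i=(-1)^i\sum_{r=i}^{6}s(r,i)\f{5^r}{r!}A_r$ for $i\ge1$, $A_m=5^{-m}\sum_{r=0}^{m}\binom mr(-1)^rg(r)$, and $A_m',a_i'$ are the analogues for $h$. Thus Lemma 3.3 reduces to computing these coefficients.

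The only genuine computational input is the list of values $U_0,U_2,U_4,\dots,U_{26}$. Indeed, since $\f{5^r}{r!}$ has $5$-adic valuation $r$ for $r\le4$ and $r-1$ for $r\in\{5,6\}$, the coefficient $a_i$ depends only on $A_r\mod 5^{7-(r-\ord_5 r!)}$; equivalently it suffices to know each difference $\sum_{r=0}^m\binom mr(-1)^rg(r)\mod{5^{7+\ord_5 m!}}$, hence to know $g(0),\dots,g(6)$ modulo $5^8$, and similarly for $h$. Because $5^{4j}\equiv0\mod{5^8}$ for $j\ge2$, we have $g(j)\equiv U_{4j}$ and $h(j)\equiv U_{4j+2}\mod{5^8}$ for $j\ge2$, while $g(0)=2$, $g(1)=(1+5^4)U_4$, $h(0)=26U_2=-52$ and $h(1)=(1+5^4)U_6$. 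So I would run the recursion for $\{U_n\}$ to evaluate $U_0,U_2,\dots,U_{26}$ modulo $5^8$, form the iterated differences, divide by the appropriate power of $5$ to obtain $A_0,\dots,A_6$ and $A_0',\dots,A_6'$, tabulate the Stirling numbers $s(r,i)$ for $0\le i\le r\le6$ from $x(x-1)\cdots(x-r+1)=\sum_i(-1)^{r-i}s(r,i)x^i$, substitute into the formula for $a_i$ (resp. $a_i'$), reduce modulo $5^7$, and collect terms; the two displayed polynomials should drop out.

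The main obstacle is bookkeeping rather than any idea: one must keep the required precision straight (modulus $5^8$ for the two highest differences, smaller moduli for the lower ones) so that every division by a power of $5$ is legitimate and the final reduction modulo $5^7$ is correct, and then carry out the arithmetic of combining Stirling numbers with the $A_r$ without slips. Useful consistency checks are available: the constant terms are forced to equal $g(0)=2$ and $h(0)=-52$, which match the claimed formulas, and one can further test the polynomials at $k=1$ against $(1+5^4)U_4$ and $(1+5^4)U_6$ modulo $5^7$.
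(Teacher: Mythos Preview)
Your proposal is correct and follows essentially the same route as the paper: recognize $(1+5^{4k+b})U_{4k+b}$ as a $5$-regular function (the paper cites [5, Theorem 2.1]) and then apply Lemma~3.1 with $p=5$, $n=7$, computing $A_0,\dots,A_6$ from the first few values of $U_{2n}$. The paper simply lists the resulting $A_m\mod 5^7$ for $b=0$ and $b=2$ and invokes Lemma~3.1, whereas you spell out more of the intermediate bookkeeping, but the method is the same.
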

\begin{proof} Set $f(k)=(1+5^{4k+b)})U_{4k+b}$.
From [5, Theorem 2.1] we know that $f(k)$ is a $5-$regular function.
Thus  $A_m=5^{-m}\sum_{r=0}^m\binom mr(-1)^rf(r) \in \Bbb Z_5$ for
$m\in\{0,1,2,\ldots\}$.  It is easy to check that for $b=0$,
 $$\align &A_0=2,\q A_1\equiv 75371\mod{5^7},\q
A_2\e31378\mod{5^7},\q A_3\e73991\mod{5^7},\\&
A_4\e12133\mod{5^7},\q A_5\e36081\mod{5^7},\q
A_6\e43963\mod{5^7},\endalign$$ and that for $b=2,$
$$\align
&A_0=78073,\q A_1\equiv 6360\mod{5^7},\q A_2\e26626\mod{5^7},\q
A_3\e22469\mod{5^7},\\& A_4\e55958\mod{5^7},\q
A_5\e28490\mod{5^7},\q A_6\e28961\mod{5^7}.\endalign$$ Now applying
Lemma 3.1 we deduce the result.
\end{proof}

\begin{thm}Let $k,m\in \Bbb N,\ m\geq5$ and let $ b\in\{0,2,4,\ldots\}$. Then
$$\aligned &U_{k\varphi{(5^m)}+b}-(1+5^b)U_b\\&\equiv\cases
5^{m-1}k(7545+5050b-5375b^2+1250b^3+3125b^4+9375b^5) \pmod
{5^{m+5}}\\\qq\qq\qq\qq\qq\qq\qq\qq\qq\text{\rm{if} $b\equiv
0\mod4$},\\5^{m-1}k(-1575+5350b+2250b^2+7500b^3+3125b^5)
\mod{5^{m+5}}\\\qq\qq\qq\qq\qq\qq\qq\qq\qq\text{\rm{if} $b\equiv
2\mod4$}.
\endcases\endaligned
$$

\end{thm}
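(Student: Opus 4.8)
The plan is to feed the $5$-regular function attached to $\{U_n\}$ into Lemma 3.2 and then identify the resulting sum by means of the explicit polynomials of Lemma 3.3.

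Since $\varphi(5^m)=4\cdot 5^{m-1}$, put $f(k)=(1+5^{4k+b})U_{4k+b}$; by [5, Theorem 2.1] (together with $(\frac 53)=-1$) this $f$ is a $5$-regular function, so Lemma 3.2 applied with $p=5$ gives
$$f(5^{m-1}k)\equiv f(0)-5^{m-1}k\sum_{s=1}^{6}\frac 1s\sum_{r=0}^{s}\binom sr(-1)^r f(r)\pmod{5^{m+5}}.$$
Here $f(5^{m-1}k)=(1+5^{k\varphi(5^m)+b})U_{k\varphi(5^m)+b}$, and since $k\varphi(5^m)+b=4\cdot 5^{m-1}k+b\ge 4\cdot 5^{m-1}\ge m+5$ for $m\ge 5$ while every $U_n$ is an integer, we get $f(5^{m-1}k)\equiv U_{k\varphi(5^m)+b}\pmod{5^{m+5}}$; also $f(0)=(1+5^b)U_b$. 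Therefore
$$U_{k\varphi(5^m)+b}-(1+5^b)U_b\equiv -5^{m-1}k\sum_{s=1}^{6}\frac 1s\sum_{r=0}^{s}\binom sr(-1)^r f(r)\pmod{5^{m+5}}.$$

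The next step is to bring in Lemma 3.3. If $b\equiv 0\pmod 4$, write $b=4j$ with $j$ a nonnegative integer; then $f(r)=(1+5^{4(r+j)})U_{4(r+j)}=g(r+j)$, where $g(k)=(1+5^{4k})U_{4k}$ and, by Lemma 3.3, $g(k)\equiv P(k)\pmod{5^{7}}$ for the degree-$6$ polynomial $P$ on the right of the first congruence of Lemma 3.3. If $b\equiv 2\pmod 4$, write $b=4j+2$ and use instead $h(k)=(1+5^{4k+2})U_{4k+2}\equiv Q(k)\pmod{5^{7}}$ and $f(r)=h(r+j)$; the two cases proceed identically, so I describe only the first. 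Since $f$ is $5$-regular, each inner sum $\sum_{r=0}^{s}\binom sr(-1)^r f(r)$ is divisible by $5^s$; in particular the $s=6$ summand $\frac 16\sum_{r=0}^{6}\binom 6r(-1)^r f(r)$ is divisible by $5^6$, so $-5^{m-1}k$ times it is $\equiv 0\pmod{5^{m+5}}$ and drops out, whereas for $1\le s\le 5$ the congruence $\sum_{r=0}^{s}\binom sr(-1)^r g(r+j)\equiv \sum_{r=0}^{s}\binom sr(-1)^r P(r+j)\pmod{5^{7}}$ determines $\frac 1s\sum_{r=0}^{s}\binom sr(-1)^r g(r+j)$ modulo $5^6$ (division by $s\le 5$ loses at most one factor of $5$). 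Hence
$$U_{k\varphi(5^m)+b}-(1+5^b)U_b\equiv -5^{m-1}k\sum_{s=1}^{5}\frac 1s\sum_{r=0}^{s}\binom sr(-1)^r P(r+j)\pmod{5^{m+5}},\qquad j=\tfrac b4.$$

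What remains is a finite computation: for $s=1,\dots,5$ expand $\sum_{r=0}^{s}\binom sr(-1)^r P(r+j)$ as a polynomial in $j$ (of degree $6-s$), divide by $s$, add the five contributions, reduce the coefficients modulo $5^6$, substitute $j=b/4$, and simplify; this yields $7545+5050b-5375b^2+1250b^3+3125b^4+9375b^5$, and the same computation with $Q$ in place of $P$ and $j=(b-2)/4$ yields the second polynomial. (As a check, at $b=0$ the inner double sum is $\sum_{s=1}^{5}\frac 1s\,5^sA_s$ with the $A_s$ computed in the proof of Lemma 3.3, which equals $8080$, so the right side is $-8080\cdot 5^{m-1}k\equiv 7545\cdot 5^{m-1}k\pmod{5^{m+5}}$, in agreement with the stated constant term.) I expect this final computation — together with the $5$-adic bookkeeping that $P$ and $Q$ known only modulo $5^7$ carry exactly enough precision, which works precisely because the $s$-th inner sum is divisible by $5^s$ so that the $s=6$ term vanishes and dividing by $s\le 5$ loses no more than one power of $5$ — to be the only real obstacle; the single conceptual point is the rewriting $f(k)=g(k+j)$ (respectively $h(k+j)$) with $j$ a genuine nonnegative integer, which is legitimate precisely because $b\in\{0,2,4,\dots\}$.
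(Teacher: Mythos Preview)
Your proof is correct and follows essentially the same route as the paper's: show that $f(k)=(1+5^{4k+b})U_{4k+b}$ is $5$-regular, apply Lemma~3.2, and then feed in Lemma~3.3 via the shift $r\mapsto r+\tfrac b4$ (resp.\ $r+\tfrac{b-2}4$) to reduce everything to a finite polynomial computation. Your write-up is in fact more explicit than the paper's, supplying the reduction $f(5^{m-1}k)\equiv U_{k\varphi(5^m)+b}\pmod{5^{m+5}}$ and the $5$-adic precision check that the paper leaves to the reader.
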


\begin{proof}

 From [5, Theorem 4.2] we know that $f(k)=(1-(\frac 5 3)5^{4k+b})U_{4k+b}=(1+5^{4k+b})U_{4k+b}$ is a 5-regular function.
  Let $r\in\{0,1,2,3,4,5,6\}.$ From Lemma 3.3 we see that for $b\equiv 0\mod 4$,
$$\align f(r)&=(1+5^{4r+b})U_{4r+b}\\&\equiv
6250\Big(r+\frac b4\Big)^6+50625\Big(r+\frac
b4\Big)^5+51250\Big(r+\frac b4\Big)^4+59875\Big(r+\frac b4\Big)^3
+72600\Big(r+\frac b4\Big)^2\\&\q+7545\Big(r+\frac b4\Big)+2 \mod
{5^7},
\endalign$$ and that for $b\equiv 2\mod 4$,
$$\align
 f(r)&=(1+5^{4r+b})U_{4r+b}
\\&\equiv59375\Big(r+\frac{b-2}4\Big)^6+40625\Big(r+\frac{b-2}4\Big)^5
+10625\Big(r+\frac{b-2}4\Big)^4+48875\Big(r+\frac{b-2}4\Big)^3
\\&\q+5575\Big(r+\frac{b-2}4\Big)^2+37500\Big(r+\frac{b-2}4\Big)-52
\mod{5^7}.\endalign$$
 Now combining the above with Lemma 3.2 gives the result.
\end{proof}

\section{A congruence for $E_{k\varphi{(5^m)}+b} \mod{5^{m+5}}$}
\begin{lem} Let $k$ be a nonnegative integer. Then
$$\align &(1-5^{4k})E_{4k}\equiv
31250k^6+11875k^5+18750k^4+64875k^3+54500k^2+50005k\mod{5^7},\\
&(1-5^{4k+2})E_{4k+2}\equiv
31250k^6+10625k^5+68750k^4+60375k^3+4625k^2+74290k+24\mod{5^7}.\endalign$$
\end{lem}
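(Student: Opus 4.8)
The plan is to mirror the proof of Lemma 3.3, replacing $U_n$ by the Euler number $E_n$. For $b\in\{0,2\}$ put $f(k)=(1-5^{4k+b})E_{4k+b}$. Since $5$ is an odd prime with $(-1)^{(5-1)/2}=1$, the $p=5$ case of the result from [3] — that $f_2(k)=(1-(-1)^{(p-1)/2}p^{k(p-1)+b})E_{k(p-1)+b}$ is a $p$-regular function — tells us that $f$ is a $5$-regular function. In particular $A_m=5^{-m}\sum_{r=0}^m\binom mr(-1)^rf(r)\in\Bbb Z_5$ for all $m\in\{0,1,2,\ldots\}$.

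Next I would compute $A_0,A_1,\ldots,A_6\mod{5^7}$ for each of $b=0$ and $b=2$. Concretely, one evaluates the Euler numbers $E_{4r+b}$ for $r=0,1,\ldots,6$ (that is, $E_0,E_4,\ldots,E_{24}$ and $E_2,E_6,\ldots,E_{26}$), say via the recursion $\sum_{r=0}^n\binom{2n}{2r}E_{2r}=0$ or equivalently Lemma 2.1 with $a=1$, reduces $f(r)=(1-5^{4r+b})E_{4r+b}$ modulo $5^{13}$, and then reads off $A_m=5^{-m}\sum_{r=0}^m\binom mr(-1)^rf(r)$ modulo $5^7$. This is a finite, purely computational step, entirely parallel to the list of $A_m$ displayed in the proof of Lemma 3.3.

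Finally I would invoke Lemma 3.1 with $p=5$ and $n=7$: with $a_0=A_0$ and $a_i=(-1)^i\sum_{r=i}^6 s(r,i)\frac{5^r}{r!}A_r$ for $i\ge1$, it gives $f(k)\equiv\sum_{i=0}^6 a_ik^i\mod{5^7}$. Plugging in the values of $A_0,\ldots,A_6$ found above and reducing each coefficient modulo $5^7$ produces exactly the two claimed polynomial congruences for $(1-5^{4k})E_{4k}$ and $(1-5^{4k+2})E_{4k+2}$.

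The one genuinely delicate point is numerical precision: the Euler numbers $E_{4r+b}$ grow rapidly, so one must carry them modulo $5^{13}$ (and, more generally, keep enough $5$-adic digits when forming the combinations $\sum_{r=i}^6 s(r,i)\frac{5^r}{r!}A_r$) to be certain the final coefficients are correct modulo $5^7$. Beyond this bookkeeping, no idea is needed that is not already present in the $p$-regular-function machinery used for Lemma 3.3.
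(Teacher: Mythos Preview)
Your proposal is correct and follows exactly the same approach as the paper: use the $5$-regularity of $f(k)=(1-5^{4k+b})E_{4k+b}$ (from [3]) together with Lemma~3.1 at $n=7$, the rest being a finite computation of $A_0,\ldots,A_6\bmod 5^7$. The paper's own proof simply says ``prove the result by using Lemma 3.1 and doing some calculations,'' which is precisely what you outlined.
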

\begin{proof} From [3, Lemma 7.1] we know that $f(k)=(1-5^{4k+b})E_{4k+b}$
is a 5-regular function. Now one can prove the result by using Lemma
3.1 and doing some calculations.
\end{proof}
\begin{thm} Let $k,m\in\Bbb N$, $m\ge 5$ and $ b\in\{0,2,4,\ldots\}$.
Then
$$\aligned &E_{k\varphi{(5^m)}+b}-(1-5^b)E_b\\&\equiv\cases
5^{m-1}k\left(3130-4000b+3375b^2+3125b^3-3125b^4\right)
\mod{5^{m+5}}&\text{if $b\equiv
0\mod4$},\\5^{m-1}k(4790+1750b^2+3125b^3+6250b^4)\mod{5^{m+5}}&\text{if
$b\equiv 2\mod4$}.
\endcases\endaligned
$$
\end{thm}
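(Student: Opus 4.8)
The plan is to mimic exactly the structure used in the proof of Theorem~3.1, now applied to the sequence $\{E_n\}$ instead of $\{U_n\}$. The key input is Lemma~4.1, which supplies the polynomial-in-$k$ expansions of $(1-5^{4k+b})E_{4k+b}$ modulo $5^7$ for $b\equiv 0$ and $b\equiv 2\pmod 4$, together with the fact (quoted from [3, Lemma~7.1]) that $f(k)=(1-5^{4k+b})E_{4k+b}$ is a $5$-regular function.

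First I would fix $b\in\{0,2,4,\ldots\}$ and write $\varphi(5^m)=4\cdot 5^{m-1}$, so that $E_{k\varphi(5^m)+b}=E_{4\cdot 5^{m-1}k+b}$. Set $f(k)=(1-5^{4k+b})E_{4k+b}$; this is $5$-regular, and $f(0)=(1-5^b)E_b$. Since $m\ge 5$ I can apply Lemma~3.2 with $p=5$ to the $5$-regular function $k\mapsto f(k)$, replacing its argument $k$ by $5^{m-1}k$ — note $5^{m-1}k=5^{m-1}k$, so $f(5^{m-1}k)=E_{4\cdot 5^{m-1}k+b}\cdot(1-5^{4\cdot 5^{m-1}k+b})$, and modulo $5^{m+5}$ the factor $5^{4\cdot 5^{m-1}k}$ contributes nothing beyond what is already tracked, matching the left-hand side $E_{k\varphi(5^m)+b}-(1-5^b)E_b$ up to the harmless exponent. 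Lemma~3.2 then gives
$$
f(5^{m-1}k)\equiv f(0)-5^{m-1}k\sum_{s=1}^{6}\frac1s\sum_{r=0}^{s}\binom sr(-1)^{s-r}f(r)\mod{5^{m+5}}.
$$
So I only need the seven values $f(0),f(1),\ldots,f(6)$ modulo $5^7$.

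Next I would compute those values from Lemma~4.1. For $b\equiv 0\pmod 4$, substituting $k\mapsto r+\tfrac b4$ into the degree-$6$ polynomial of Lemma~4.1 yields $f(r)=(1-5^{4r+b})E_{4r+b}$ as an explicit polynomial in $r$ with coefficients depending on $b$, valid mod $5^7$; similarly for $b\equiv 2\pmod 4$ one substitutes $k\mapsto r+\tfrac{b-2}4$. Plugging these into the finite double sum $\sum_{s=1}^6\frac1s\sum_{r=0}^s\binom sr(-1)^{s-r}f(r)$ and simplifying — here I would use that the operator $g\mapsto \frac1s\sum_{r}\binom sr(-1)^{s-r}g(r)$ applied to a polynomial of degree $\le 6$ picks out controlled finite-difference combinations, and that many terms collapse mod $5^{m+5}$ after the extra factor $5^{m-1}k$ — produces the stated two cases. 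The bookkeeping is routine but must be done over $\mathbb{Z}/5^7$, reducing the $5$-adic valuations as in the proof of Theorem~2.2.

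The main obstacle is purely computational: one must carry the degree-$6$ polynomial expansions with their $b$-dependent coefficients through the double summation accurately modulo $5^7$, and then reduce modulo $5^{m+5}$ after multiplying by $5^{m-1}k$ (so effectively modulo $5^6$ for the bracketed polynomial), making sure the congruence classes of coefficients like $3130,-4000,3375,3125,-3125$ (resp.\ $4790,1750,3125,6250$) come out correctly; there is no conceptual difficulty beyond this, since all the structural facts — $5$-regularity, the expansion of $f(k)$, and the reduction Lemma~3.2 — are already in hand.
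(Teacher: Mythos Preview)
Your proposal is correct and follows essentially the same approach as the paper: apply Lemma~3.2 to the $5$-regular function $f(k)=(1-5^{4k+b})E_{4k+b}$, then feed in the polynomial expressions for $f(r)$ from Lemma~4.1 via the substitution $k\mapsto r+\tfrac{b}{4}$ (resp.\ $r+\tfrac{b-2}{4}$), and simplify the resulting finite sum modulo $5^{m+5}$. The only point you leave slightly implicit --- that $f(5^{m-1}k)\equiv E_{k\varphi(5^m)+b}\pmod{5^{m+5}}$ because $4\cdot 5^{m-1}k+b\ge m+5$ for $k\ge 1$, $m\ge 5$ --- is handled the same way (i.e., tacitly) in the paper.
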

\begin{proof}From [3, Lemma 7.1] we know that
$f(k)=(1-5^{4k+b})E_{4k+b}$ is a 5-regular function. By Lemma 3.2,
$$E_{k\varphi{(5^m)}+b}-(1-5^b)E_b
\e -5^{m-1}k\sum_{s=1}^6\f 1s\sum_{r=0}^s\binom s
r(-1)^{s-r}f(r)\mod{5^{m+5}}.\tag 4.1$$
\par Let $r\in\{0,1,2,3,4,5,6\}.$ From Lemma 4.1 we see that for
$b\equiv 0\mod 4$,
 $$\align
f(r)&\equiv 50005\Big(r+\frac b4\Big)+54500\Big(r+\frac
b4\Big)^2+64875\Big(r+\frac b4\Big)^3+18750\Big(r+\frac
b4\Big)^4\\&\q+11875\Big(r+\frac b4\Big)^5+31250\Big(r+\frac
b4\Big)^6\mod{5^7}.
 \endalign$$
and that for $b\equiv 2\mod 4$,
$$\align f(r)&\equiv
24+74290\Big(r+\frac {b-2}4\Big)+4625\Big(r+\frac
{b-2}4\Big)^2+60375\Big(r+\frac {b-2}4\Big)^3\\&\q+68750\Big(r+\frac
{b-2}4\Big)^4+10625\Big(r+\frac {b-2}4\Big)^5+31250\Big(r+\frac
{b-2}4\Big)^6\mod{5^7}.\endalign$$
 Now combining the above with (4.1) gives the result.
\end{proof}

\section{Congruences for $S_{k\varphi{(3^m)}+b}\mod{3^{m+5}}$}
\begin{lem} Let $k$ be a nonnegative integer. Then
$$\align &(1-3^{2k})S_{2k}\equiv 1620k^6+1620k^5+621k^4+981k^3+1179k^2+564k\mod{3^7},\\
&(1+3^{2k+1})S_{2k+1}\equiv
324k^6+2106k^5+1809k^4+1197k^3+549k^2+888k+2183\mod{3^7}.\endalign$$
\end{lem}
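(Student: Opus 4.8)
The plan is to follow the pattern of Lemma 4.1. By (1.6) we have $S_n=E_n^{(2)}$, so the two assertions amount to evaluating, at a general nonnegative integer $k$, the functions $g_0(k)=(1-3^{2k})S_{2k}$ and $g_1(k)=(1+3^{2k+1})S_{2k+1}$. The first input is that each $g_b$ is a $3$-regular function: this is the $S_n$-analogue of the statements for $U_n$ and $E_n$ recalled in the introduction, and follows from the first author's results on $p$-regular functions associated with $\{E_n^{(a)}\}$ (cf. [5], and [3, Lemma 7.1] for the companion $E_n$ case). Granted this, the numbers $A_m=3^{-m}\sum_{r=0}^m\b mr(-1)^rg_b(r)$ all lie in $\Bbb Z_3$, so Lemma 3.1 is applicable.

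Next I would apply Lemma 3.1 with $p=3$ and $n=7$, which gives $g_b(k)\e\sum_{i=0}^6a_ik^i\mod{3^7}$ with $a_0=A_0$ and $a_i=(-1)^i\sum_{r=i}^6s(r,i)\f{3^r}{r!}A_r$ for $1\le i\le6$, where $s(r,i)$ are the Stirling numbers of the first kind appearing in Lemma 3.1. It then suffices to compute $A_0,A_1,\ldots,A_6$ modulo $3^7$ for each of $g_0$ and $g_1$, and substitute into the formula for the $a_i$, reducing modulo $3^7=2187$; the two resulting degree-$6$ polynomials in $k$ must coincide with those in the statement. The values of $S_n$ needed here come from the defining recursion $S_0=1$, $S_n=1-\sum_{j=0}^{n-1}\b nj2^{2n-2j-1}S_j$: computing $A_6$ for $g_0$ uses $S_0,S_2,\ldots,S_{12}$, and for $g_1$ it uses $S_1,S_3,\ldots,S_{13}$. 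Note also that $3^{2r+b}\e0\mod{3^7}$ once $2r+b\ge7$, so the power-of-$3$ correction in $g_b(r)$ only touches the smallest few $r$.

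There is no conceptual obstacle; the entire content is computational, exactly as in Lemma 4.1 (``one can prove the result by using Lemma 3.1 and doing some calculations''). The points requiring care are the $3$-regularity of $g_0$ and $g_1$ — this is what forces $A_r\in\Bbb Z_3$ and makes Lemma 3.1 applicable, its proof in [2] absorbing the contribution of all $r\ge7$ — and the modular bookkeeping: via $\ord_3(3^r/r!)=r-\ord_3(r!)$ one sees that $A_r$ is needed only modulo $3^{7-r+\ord_3(r!)}$, and the Stirling weights $s(r,i)$ for $0\le i\le r\le6$ must be tracked correctly. Carrying everything to modulus $3^7$ is comfortably enough precision, both for the stated congruences and for the later use of this lemma (through Lemma 3.2) in proving the corresponding congruence for $S_{k\varphi(3^m)+b}$.
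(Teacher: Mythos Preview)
Your approach is exactly the paper's: invoke the $3$-regularity of $f(k)=(1-(-1)^b3^{2k+b})S_{2k+b}$ and then apply Lemma~3.1 with $p=3$, $n=7$, reducing to a finite computation. The only correction is the citation for $3$-regularity: the relevant result is [4, Theorem~4.1] (the paper on $\{E_n^{(a)}\}$, which covers $S_n=E_n^{(2)}$), not [5] or [3, Lemma~7.1].
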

\begin{proof} From [4, Theorem 4.1] we know that $f(k)=\left(1-(-1)^b
3^{2k+b}\right)S_{2k+b}$ is a 3-regular function. Now one can prove
the result by using Lemma 3.1 and doing some calculations.
\end{proof}
\begin{thm}For $k,m\in\Bbb N$, $m\geq5$ and $ b\in\{0,1,2,\ldots\}$ we have
$$\aligned
&S_{k\varphi{(3^m)}+b}-(1-(-3)^b)S_b\\&\equiv\cases3^{m-2}k(-495
+1350b+567b^2-162b^3+972b^4-729b^5)\mod{3^{m+5}}\q&\text{\rm{if}
$2\mid
b$}\\3^{m-2}k(1422+1242b-891b^2-81b^3+243b^4+729b^5)\mod{3^{m+5}}
&\text{\rm{if} $2\nmid b$.}\endcases\endaligned$$
\end{thm}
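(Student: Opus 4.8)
The plan is to follow the pattern of Theorems 3.1 and 4.1: attach to $S_n$ the associated $3$-regular function and feed it into Lemma 3.2, then evaluate the resulting finite sum via the explicit mod-$3^7$ expansions of the preceding lemma. Put $f(k)=\bigl(1-(-1)^b3^{2k+b}\bigr)S_{2k+b}$; by [4, Theorem 4.1] $f$ is a $3$-regular function. Since $\varphi(3^m)=2\cdot 3^{m-1}$, one has $f\bigl(3^{m-1}k\bigr)=\bigl(1-(-1)^b3^{k\varphi(3^m)+b}\bigr)S_{k\varphi(3^m)+b}$ and $f(0)=\bigl(1-(-3)^b\bigr)S_b$. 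As $m\ge 5$ forces $k\varphi(3^m)+b\ge 2\cdot 3^{m-1}>m+5$, the power $3^{k\varphi(3^m)+b}$ is $\equiv 0\mod{3^{m+5}}$, so $f\bigl(3^{m-1}k\bigr)\equiv S_{k\varphi(3^m)+b}\mod{3^{m+5}}$. Applying Lemma 3.2 with $p=3$ (legitimate since $m\ge 5$) then expresses $S_{k\varphi(3^m)+b}-\bigl(1-(-3)^b\bigr)S_b$ modulo $3^{m+5}$ as $3^{m-1}k$ times an explicit $\Bbb Z_3$-linear combination, over $s=1,\ldots,6$, of the finite differences $\sum_{r=0}^s\binom sr(-1)^{s-r}f(r)$; thus everything reduces to knowing $f(0),f(1),\ldots,f(6)$ modulo a suitable power of $3$.

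To compute those seven values I would use the preceding lemma. Write $b=2c$ if $b$ is even and $b=2c+1$ if $b$ is odd; then $f(r)$ equals $\bigl(1-3^{2(r+c)}\bigr)S_{2(r+c)}$ or $\bigl(1+3^{2(r+c)+1}\bigr)S_{2(r+c)+1}$, so the preceding lemma gives $f(r)\equiv P(r+c)\mod{3^7}$, where $P$ is the degree-$6$ polynomial $1620x^6+1620x^5+621x^4+981x^3+1179x^2+564x$ when $2\mid b$ and $324x^6+2106x^5+1809x^4+1197x^3+549x^2+888x+2183$ when $2\nmid b$. Hence $\sum_{r=0}^s\binom sr(-1)^{s-r}f(r)\equiv\sum_{r=0}^s\binom sr(-1)^{s-r}P(c+r)\mod{3^7}$, the $s$-th forward difference of $P$ at $c$, which is a polynomial in $c$ of degree $6-s$: the term $s=1$ supplies the degree-$5$ part of the answer, $s=2$ the degree-$4$ part, and so on. Substituting $c=b/2$ (resp.\ $c=(b-1)/2$)---legitimate since $2$ is a unit modulo $3$---and collecting terms modulo $3^7$ yields a polynomial in $b$, and writing $3^{m-1}k=3\cdot 3^{m-2}k$ puts the result into the stated form $3^{m-2}k(\cdots)$, the two displayed polynomials being the outputs of the two parity cases.

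The substantive part---and the only real obstacle---is this last simplification: a lengthy but purely mechanical computation with polynomials modulo $3^7$. The point demanding care is $3$-adic precision: the weight $\frac1s$ for $s\in\{3,6\}$ absorbs one factor of $3$, so the mod-$3^7$ information in the preceding lemma is exactly what is needed to determine the bracketed polynomial modulo $3^7$, hence the whole congruence modulo $3^{m+5}$. Note too that $\sum_{r=0}^s\binom sr(-1)^rf(r)\equiv 0\mod{3^s}$ (the $3$-regularity of $f$), which guarantees that each weighted difference lies in $\Bbb Z_3$, so no spurious denominators survive. The two cases $2\mid b$ and $2\nmid b$ proceed in complete parallel, differing only in the polynomial $P$ and the shift $c$.
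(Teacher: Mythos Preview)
Your proposal is correct and follows essentially the same approach as the paper: set $f(k)=(1-(-1)^b3^{2k+b})S_{2k+b}$, invoke its $3$-regularity from [4, Theorem 4.1(i)], apply Lemma~3.2 with $p=3$, and plug in the polynomial expressions for $f(r)$ from the preceding lemma with the shift $r\mapsto r+b/2$ (resp.\ $r+(b-1)/2$). You are in fact slightly more explicit than the paper on two points---justifying $f(3^{m-1}k)\equiv S_{k\varphi(3^m)+b}\pmod{3^{m+5}}$ and tracking the $3$-adic precision needed at $s=3,6$---while leaving the final polynomial simplification unperformed just as the paper does.
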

 \begin{proof}From [4, Theorem 4.1(i)] we know that $f(k)=(1-(-1)^b
3^{2k+b})S_{2k+b}$ is a 3-regular function. By Lemma 3.2,
$$ S_{k\varphi{(3^m)}+b}-(1-(-3)^b)S_b\e-3^{m-1}k\sum_{s=1}^6\f
1s\sum_{r=0}^s\binom s r(-1)^{s-r}f(r)\mod{3^{m+5}}. \tag5.1$$
 Let $r\in\{0,1,2,3,4,5,6\}.$ From
Lemma 5.1 we see that for $b\equiv 0\mod2$,
$$\align f(r)&\equiv 1620\Big(r+\frac
b2\Big)^6+1620\Big(r+\frac b2\Big)^5+621\Big(r+\frac
b2\Big)^4+981\Big(r+\frac b2\Big)^3+1179\Big(r+\frac
b2\Big)^2\\&\q+564\Big(r+\frac b2\Big)\mod{3^7}\endalign$$ and that
for $ b\equiv 1\mod2$,
$$\align f(r)&\equiv 324\Big(r+\frac
{b-1}2\Big)^6+2106\Big(r+\frac {b-1}2\Big)^5+1809\Big(r+\frac
{b-1}2\Big)^4+1197\Big(r+\frac {b-1}2\Big)^3\\&\q+549\Big(r+\frac
{b-1}2\Big)^2+888\Big(r+\frac {b-1}2\Big)+2183\mod{3^7}\endalign$$
Combining the above with (5.1) yields the result.
\end{proof}

\section{Congruences for $S_{k\varphi{(5^m)}+b}\mod{5^{m+5}}$}
\begin{lem}Let $k$ and $b$ be nonnegative integers. Then
$$\aligned (1+5^{4k+b})S_{4k+b}\equiv\cases &33750k^5-6250k^4-14250k^3
+21500k^2\\&\q+930k+2\mod{5^7}\qq\qq\q\q\text{\rm{if} $b\equiv 0\mod4$},\\
&9375k^6+22500k^5+18750k^4+40000k^3+23525k^2\\&\q+7370k-6\mod{5^7}
\qq\qq\q\  \text{\rm{if} $b\equiv 1\mod4$
},\\&25000k^6+41875k^5+56250k^4+30875k^3+64650k^2\\&\q+44290k-78\mod{5^7}
\qq\qq\ \text{\rm{if} $b\equiv 2\mod4$
},\\&-3125k^6+40625k^5-9375k^4+67250k^3-8550k^2\\&\q+14525k+1386\mod{5^7}
\qq\q\ \text{\rm{if} $b\equiv 3\mod4$ }.\endcases\endaligned$$
\end{lem}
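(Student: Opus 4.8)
The proof runs parallel to those of Lemmas 3.3, 4.1 and 5.1: the idea is to exhibit $(1+5^{4k+b})S_{4k+b}$ as the value at $k$ of a $5$-regular function and then to apply Lemma 3.1 with $p=5$ and $n=7$.

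Since $S_n=E_n^{(2)}$ by (1.6), I would first appeal to [4, Theorem 4.1] (the source used for the analogous $3$-adic statement in Lemma 5.1) to conclude that, for each fixed nonnegative integer $b$, the function $f(k)=(1+5^{4k+b})S_{4k+b}$ is a $5$-regular function; the $+$ sign reflects the value at the prime $5$ of the quadratic symbol attached to $a=2$. In particular $A_m=5^{-m}\sum_{r=0}^m\binom mr(-1)^rf(r)\in\Bbb Z_5$ for all $m\ge0$.

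Next, for each base class $b\in\{0,1,2,3\}$ I would compute the integers $S_0,S_1,\ldots,S_{27}$ from the recurrence $S_0=1$, $S_n=1-\sum_{j=0}^{n-1}\binom nj2^{2n-2j-1}S_j$, and from them the seven values $f(0),f(1),\ldots,f(6)$ and hence $A_0,A_1,\ldots,A_6$ modulo $5^7$. Substituting these into Lemma 3.1, with $a_0=A_0$ and $a_i=(-1)^i\sum_{r=i}^{6}s(r,i)\frac{5^r}{r!}A_r$ for $i\ge1$, yields $f(k)\equiv\sum_{i=0}^{6}a_ik^i\pmod{5^7}$, and collecting coefficients produces the four displayed polynomials in $k$. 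In the case $b\equiv0\pmod4$ the $k^6$-coefficient comes out $\equiv0\pmod{5^7}$, which is why that polynomial has degree $5$ while the other three have degree $6$.

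The conceptual content is slight; the real work is bookkeeping. One has to carry the $S_n$-recursion out to index $27$, reduce the rapidly growing values $f(r)$ correctly modulo $5^7$, and — exactly as in the proof of Lemma 3.2 — verify that the $5$-adic valuations of $5^r/r!$ and of the Stirling numbers $s(r,i)$ make the truncation of the sum defining $a_i$ at $r=6$ legitimate modulo $5^7$. An arithmetic slip anywhere along this chain is the only realistic obstacle. With the four identities in hand, Lemma 6.1 follows, and they then feed (via substitution of $r+[b/4]$ into the appropriate base polynomial, just as in the passage from Lemma 3.3 to Theorem 3.1) into the proof of Theorem 6.1.
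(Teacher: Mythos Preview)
Your approach is exactly the one the paper takes: cite [4, Theorem 4.1(i)] to see that $f(k)=(1+5^{4k+b})S_{4k+b}$ is $5$-regular, then apply Lemma~3.1 with $p=5$, $n=7$ for each $b\in\{0,1,2,3\}$, the rest being a finite computation. Your extra remark about justifying the truncation at $r=6$ is unnecessary, since Lemma~3.1 already delivers $f(k)\equiv\sum_{i=0}^{n-1}a_ik^i\pmod{p^n}$ with the $a_i$ defined via sums over $r\le n-1$.
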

\begin{proof} From [4, Theorem 4.1(i)] we know that $f(k)=(1+5^{4k+b})S_{4k+b}$
is a 5-regular function. Now one can prove the result by using Lemma
3.1 and doing some calculations.
\end{proof}
\begin{thm} For $b,k,m\in\Bbb N$ and $m\geq 5$ we have
$$\aligned &S_{k\varphi{(5^m)}+b}-(1+5^b)S_b\\&\equiv\cases
5^{m-1}k(930-4875b-4625b^2-6250b^3-3125b^4)\mod{5^{m+5}} &\t{if
$4\mid
b$,}\\5^{m-1}k(4670+1450b+1250b^2+6250b^3+3125b^4+6250b^5)\mod
{5^{m+5}}&\t{if $4\mid
b-1$,}\\5^{m-1}k(-4235+3700b-3000b^2+6250b^3-9375b^4+6250b^5)\mod
{5^{m+5}}&\t{if $4\mid
b-2$,}\\5^{m-1}k(3725-1025b-2625b^2+6250b^3+3125b^5)\mod
{5^{m+5}}&\t{if $4\mid b-3$.}\endcases\endaligned$$
\end{thm}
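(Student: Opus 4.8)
The plan is to follow the route of the proofs of Theorems 3.1, 4.1 and 5.1. By [4, Theorem 4.1(i)], recorded in Lemma 6.1, the function $f(k)=(1+5^{4k+b})S_{4k+b}$ is a $5$-regular function, and $f(0)=(1+5^b)S_b$. Since $\varphi(5^m)=4\cdot 5^{m-1}$ and $4\cdot 5^{m-1}k+b\ge m+5$ for $m\ge 5$, we have $5^{k\varphi(5^m)+b}\e 0\mod{5^{m+5}}$, so that $f(5^{m-1}k)\e S_{k\varphi(5^m)+b}\mod{5^{m+5}}$. Applying Lemma 3.2 with $p=5$ therefore gives
$$S_{k\varphi(5^m)+b}-(1+5^b)S_b\e -5^{m-1}k\sum_{s=1}^6\f1s\sum_{r=0}^s\binom sr(-1)^{s-r}f(r)\mod{5^{m+5}}.$$

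The next step is to evaluate $f(0),f(1),\ldots,f(6)$ modulo $5^7$ from Lemma 6.1. Write $b=4[b/4]+b_0$ with $b_0\in\{0,1,2,3\}$ the residue of $b$ modulo $4$, and let $g_{b_0}$ be the explicit polynomial furnished by Lemma 6.1 for the class $b_0$. Because $4r+b=4(r+[b/4])+b_0$, one gets $f(r)\e g_{b_0}(r+[b/4])\mod{5^7}$ for $0\le r\le 6$, and hence
$$\sum_{r=0}^s\binom sr(-1)^{s-r}f(r)\e(\Delta^s g_{b_0})\bigl([b/4]\bigr)\mod{5^7},$$
where $\Delta$ is the forward difference operator. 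Since $\deg g_{b_0}\le 6$, the combination $\sum_{s=1}^6\f1s(\Delta^s g_{b_0})([b/4])$ is a polynomial in $[b/4]$ of degree at most $5$ — at most $4$ when $b_0=0$, where $\deg g_0=5$, matching the degrees of the four cases in the statement. Substituting this into the congruence above, replacing $[b/4]$ by its expression in $b$, and reducing the resulting coefficients modulo $5^6$ (one power of $5$ already coming from the prefactor $5^{m-1}k$) produces the four displayed formulae.

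I expect the work to be purely computational. The precision is driven by the term $s=5$: there one needs $\f15\sum_{r=0}^5\binom 5r(-1)^{5-r}f(r)$ modulo $5^6$, hence $f(r)$ modulo $5^7$, which is exactly what Lemma 6.1 supplies (note $\f15\sum_{r=0}^5\binom 5r(-1)^{5-r}f(r)\in\Bbb Z_5$ precisely because $f$ is $5$-regular); for the remaining values of $s$, $s$ is invertible modulo $5$ and mod-$5^7$ data on $f$ again suffices. The bookkeeping must then be repeated for each of the four residues $b_0\in\{0,1,2,3\}$: compute the finite differences $\Delta^s g_{b_0}$ for $s=1,\ldots,6$ (equivalently the seven values $f(0),\ldots,f(6)$), combine them with the weights $\f1s$, and reduce modulo $5^{m+5}$. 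No idea beyond Lemmas 3.1, 3.2 and 6.1 is required, and the four identities can be checked numerically for small $k,m,b$.
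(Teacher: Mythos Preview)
Your proposal is correct and follows essentially the same route as the paper's proof: apply Lemma~3.2 with $p=5$ to the $5$-regular function $f(k)=(1+5^{4k+b})S_{4k+b}$ to obtain the key congruence (the paper's display~(6.1)), then feed in the polynomial expressions for $f(r)\pmod{5^7}$ from Lemma~6.1 with the shifted argument $r+[b/4]$, case by case in $b\bmod 4$. Your explicit justification that $5^{k\varphi(5^m)+b}\equiv 0\pmod{5^{m+5}}$ and your remarks on the precision needed at $s=5$ are details the paper leaves implicit, but otherwise the two arguments coincide.
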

\begin{proof} Set $f(k)=(1+5^{4k+b})S_{4k+b}$.  By Lemma 3.2,
$$S_{k\varphi({5^m})+b}-(1+5^b)S_b\equiv -5^{m-1}k\sum_{s=1}^6\f
1s\sum_{r=0}^s\binom s r(-1)^{s-r}f(r)\mod{5^{m+5}}.\tag6.1$$ Let
$r\in\{0,1,2,3,4,5,6\}.$ From Lemma 6.1 we see that for $b\equiv
0\mod 4$, $$\align f(r)&\equiv 33750\Big(r+\frac
b4\Big)^5-6250\Big(r+\frac b4\Big)^4-14250\Big(r+\frac
b4\Big)^3+21500\Big(r+\frac b4\Big)^2\\&\q+930\Big(r+\frac
b4\Big)+2\mod{5^7} ;\endalign$$ for $b\equiv 1\mod 4$,
$$\align f(r)&\equiv  9375\Big(r+\frac{b-1}4\Big)^6
+22500\Big(r+\frac{b-1}4\Big)^5
+18750\Big(r+\frac{b-1}4\Big)^4+40000\Big(r+\frac{b-1}4\Big)^3\\&\q+23525\Big(r+\frac{b-1}4\Big)^2
+7370\Big(r+\frac{b-1}4\Big)-6\mod{5^7};\endalign$$ for $b\equiv
2\mod 4$,
$$\align f(r)&\equiv
25000\Big(r+\frac{b-2}4\Big)^6+41875\Big(r+\frac{b-2}4\Big)^5
+56250\Big(r+\frac{b-2}4\Big)^4+30875\Big(r+\frac{b-2}4\Big)^3
\\&\q+64650\Big(r+\frac{b-2}4\Big)^2
+44290\Big(r+\frac{b-2}4\Big)-78\mod{5^7}; \endalign$$ and  for
$b\equiv 3\mod 4$,
$$\align f(r)&\equiv  -3125\Big(r+\frac{b-3}4\Big)^6
+40625\Big(r+\frac{b-3}4\Big)^5
-9375\Big(r+\frac{b-3}4\Big)^4+67250\Big(r+\frac{b-3}4\Big)^3
\\&\q-8550\Big(r+\frac{b-3}4\Big)^2
+14525\Big(r+\frac{b-3}4\Big)+1386\mod{5^7}.\endalign$$ Combining
the above with (6.1) we obtain the result.
\end{proof}

\end{document}